\newcommand{\Sn}{\mathcal S_N}
\newcommand{\tr}{\mathrm{tr}}
\newcommand{\Ss}{\bar{\mathbb E}}
\newcommand{\checkW}{\mathcal{\check W}}
\newtheorem{theorem}{Theorem}
\newtheorem{lemma}[theorem]{Lemma}
\newtheorem{assumption}{Assumption}
\newtheorem{remark}{Remark}
\newtheorem{definition}{Definition}
\title{\LARGE \bf
Regret Analysis with Almost Sure Convergence for OBF-ARX Filter
}
\author{Jiayun Li$^{1}$, Yiwen Lu$^{1}$ and Yilin Mo$^{1}$
\thanks{$^{1}$Jiayun Li, Yiwen Lu and Yilin Mo are with the Department of Automation and BNRist, Tsinghua University, Beijing 100084, P.R.China. Emails: {\tt\small \{lijiayun22, luyw20\}@mails.tsinghua.edu.cn, ylmo@tsinghua.edu.cn}.}%
}
\begin{document}

\maketitle
\thispagestyle{empty}
\pagestyle{empty}

\begin{abstract}

This paper considers the output prediction problem for an unknown Linear Time-Invariant (LTI) system. In particular, we focus our attention on the OBF-ARX filter, whose transfer function is a linear combination of Orthogonal Basis Functions (OBFs), with the coefficients determined by solving a least-squares regression. We prove that the OBF-ARX filter is an accurate approximation of the Kalman Filter (KF) by quantifying its online performance. Specifically, we analyze the average regret between the OBF-ARX filter and the KF, proving that the average regret over $N$ time steps converges to the asymptotic bias at the speed of $O(N^{-0.5+\epsilon})$ almost surely for all $\epsilon>0$. Then, we establish an upper bound on the asymptotic bias, demonstrating that it decreases exponentially with the number of OBF bases, and the decreasing rate $\tau(\boldsymbol{\lambda}, \boldsymbol{\mu})$ explicitly depends on the poles of both the KF and the OBF. Numerical results on diffusion processes validate the derived bounds.

\end{abstract}

\section{Introduction}
The problem of modeling unknown systems has long been a focal point of extensive study within the control community. Recent interest in data-driven methods has spurred a significant amount of research into adaptive modeling algorithms, such as online output prediction algorithms for unknown systems and the quantification of their online performance~\cite{oymak_revisiting_2022, simchowitz_learning_2019,Tsiamis2023}. The output prediction algorithms aim to minimize the output prediction error, commonly measured by the Mean Square Error (MSE), based on historical inputs and outputs~\cite{tufa_closed-loop_2011}. Such algorithms play a crucial role in controller design, such as in model-predictive control problems~\cite{muddu_reparametrized_2010} and direct data-driven control methods~\cite{breschi2023data}, as well as offering substantial benefits in practical scenarios, such as GPS navigation~\cite{Tsiamis2023}. However, the task of output prediction for an unknown partially observed LTI systems is challenging due to the problem's nonlinearity and nonconvexity~\cite{yu_identification_2018}.

It is widely established that given the true system parameters and the noise statistics, the Kalman Filter (KF) is the optimal output predictor for LTI systems among all linear predictors~\cite{humpherys2012fresh}. As a result, for unknown LTI systems, several recent works focus on identifying the systems' KF using historical samples and quantifying the online performance of the algorithms, for instance, the algorithms' regret over time~\cite{Tsiamis2023,zhang2023learning}. Moreover, several other works are concerned with the online performance of the linear-quadratic-Gaussian regulator, where the identification of KF is also included~\cite{al2023sample,ziemann2022regret}.

On the other hand, another line of work leverages the well-studied Orthogonal Basis Function (OBF) methods to parametrize the online prediction filter in the frequency domain. These OBF-based filters are shown to require fewer parameters than the conventional FIR, ARX and ARMAX filters and to mitigate parameter inconsistency issues encountered in ARX and ARMAX filters~\cite{tufa2012system}. Specifically, this type of methods rely on OBF, a classic framework in system identification. The OBF method proposes to approximate the transfer function of an LTI system using the linear combination of a group of OBFs, such as the Laguerre bases, the Kautz bases, and the Generalized OBF (GOBF), with the poles of bases selected based on \emph{a priori} system knowledge. With a set of predetermined OBFs, the identification problem reduces to a least-squares regression. Notably, Van den Hof et al.~\cite{van_den_hof_system_1995} and Heuberger et al.~\cite{heuberger_generalized_1995} quantify the asymptotic bias of the GOBF, revealing that the bias decreases exponentially w.r.t. the number of OBF bases. They also prove that the solutions to the least-squares regression converge with probability~1~\cite{van_den_hof_system_2005}.

Subsequently, various researches propose online output predictors based on the OBF. One type of methods introduce the OBF-ARX filter by parametrizing the ARX filter using the OBF bases~\cite{van_den_hof_system_2005, PATWARDHAN2005819, BOUZRARA2012848}, hence reducing the problem to least-squares regression. The resulting filter is proved to converge in $L^2$ to the asymptotic solution of the regression for stable SISO systems~\cite[Chapter 4]{van_den_hof_system_2005}. Madakyaru et al.~\cite{muddu_reparametrized_2010} further introduce the state-space form of the OBF-ARX filter for MIMO systems by splitting the filter into multiple multi-input single-output filters. Additionally, Tufa et al.~\cite{tufa2012system} propose to parametrize the rational function before the input in ARX and ARMAX structures with OBF, introducing the ARX-OBF and OBF-ARMAX methods, respectively. However, to the best of our knowledge, the connection between these OBF-based filters and the KF has not been extensively studied, which naturally raises a research question:

\emph{Is the OBF-based filter an accurate approximation to the KF with online performance guarantees?}

In this paper, we study the online average regret between the OBF-ARX filter and the KF. The steady-state KF is considered as an LTI system, with past inputs and outputs as its inputs and the prediction value as its output. As a result, we approximate the transfer function of KF using a linear combination of OBF bases, resulting in an analogous form as the OBF-ARX filter for SISO systems~\cite[Chapter 4]{van_den_hof_system_2005}. Moreover, we quantify the average regret $\mathcal R_N$ of the filter over $N$ time steps, demonstrating that $\mathcal R_N\sim \bar\alpha\tau(\boldsymbol{\lambda}, \boldsymbol{\mu})^{-(\#\text{bases})}+O(N^{-0.5+\epsilon})$ almost surely for a constant $\bar\alpha>0$ and any small number $\epsilon>0$. The bound shows that the average regret converges at the rate of $O(N^{-0.5+\epsilon})$ to the asymptotic bias almost surely, where the bias can be exponentially reduced by increasing the number of the OBF bases. Additionally, the exponential decreasing rate $\tau(\boldsymbol{\lambda}, \boldsymbol{\mu})$ explicitly depends on the pole locations $\boldsymbol{\lambda}$ of the KF and those  locations $\boldsymbol{\mu}$ of the GOBF. This result also demonstrates that \emph{a priori} knowledge of the KF can facilitate the pole selection of the OBF-ARX filter. The main contribution of the paper is threefold:
\begin{itemize}
    \item We study the relationship between the OBF-ARX filter and the KF, showing that OBF-ARX is an accurate approximation of the KF.
    \item We prove that the online average regret of the OBF-ARX filter over $N$ time steps converges to the asymptotic bias at the speed of $O(N^{-0.5+\epsilon})$ almost surely.
    \item We derive an upper bound on the asymptotic bias of the average regret between the OBF-ARX filter and KF, demonstrating that the bias deceases exponentially w.r.t. the number of bases.
\end{itemize}
Numerical examples on diffusion processes are provided to validate the theoretical results.

\subsubsection*{Paper Structure}
Section~\ref{sec:preliminary} briefly introduces the OBF bases and the OBF-ARX filter in the literature. In Section~\ref{sec:main_results}, we conduct an analysis on the average regret of the OBF-ARX filter. Section~\ref{sec:simulations} leverages numerical results to validate the derived bounds. Finally, Section~\ref{sec:conclusions} concludes the paper. Proofs for all the theorems are provided in the appendix.

\subsubsection*{Notations}
$\mathbb{R}$ is the set of all real numbers. $\mathbb{C}$ is the set of all complex numbers. $\mathbb{R}^{a \times b}$ is the set of $a \times b$ real matrices, and $\mathbb{C}^{a \times b}$ is the set of $a \times b$ complex matrices. $I_n$ denotes the $n$-dimensional unit matrix. $\mathbf{0}_n$ denotes an $n$-dimensional all-zero column vector and $\boldsymbol{0}_{m\times n}$ represents an $m\times n$ zero matrix. $\boldsymbol{1}_n$ and $\boldsymbol{1}_{m\times n}$ are similarly defined. $A^\top$ denotes the matrix transpose and $A^H$ denotes the conjugate transpose of $A$. $\|A\|$ denotes the $2$-norm for a vector $A$ or a matrix $A$, and $\mathcal{H}_2$ norm for a system. For Hermitian matrices $A\in\mathbb{C}^{n\times n}$, $A\succeq 0$ implies that $A$ is positive semi-definite. We say that $|f(k)|\sim O(g(k))$ for $g(k)>0$ if there exists a constant $M>0$, such that $\lim_{k\to\infty}\frac{|f(k)|}{g(k)}\leq M$ for all $k=1, 2, \cdots$. The extended expectation $\Ss$ is defined as $\bar{\mathbb E}(\phi(t))=\lim_{t\to\infty}\mathbb E(\phi(t))$ for a proper $\phi$.

\section{Preliminary}\label{sec:preliminary}
The Orthogonal Basis Function (OBF) method tackles the identification problem for stable LTI systems. Consider an $n$-dimensional stable LTI system with $p$ inputs and $m$ outputs:
\begin{equation}
    y(t)=G(z)u(t)+v(t),
\end{equation}
where $G\in\mathcal H_2$, $u$ is a quasistationary signal and $v$ is a martingale difference sequence. $z$ denotes the time-shift operator, i.e., $zu(t)=u(t+1)$. The objective of the OBF method is to identify the system's transfer function $G(z)$ by approximating it through a linear combination of a set of predefined OBFs, $V_k(z)$:
\begin{equation}
    \check G(z)=\sum_{k=1}^{\check q} L_kV_k(z).
\end{equation}
The OBFs $V_k(z)$ can be selected as either scalar functions or, given additional information regarding each instance of the transfer function, as matrix-valued functions~\cite{ninness_obf_mimo}. In this paper, for ease of discussion, we focus on scalar OBFs. Consequently, the coefficients of linear combination are complex matrices $L_k\in\mathbb C^{p\times m}$.
Common examples of OBF include Laguerre bases, Kautz bases, and Generalized OBF (GOBF)~\cite{van_den_hof_system_1995}. See~\cite{van_den_hof_system_2005} for more details. Specifically, both the Laguerre and Kautz bases are special cases of the GOBF~\cite{van_den_hof_system_1995}:
\begin{align}\label{eq:gobf}
    V_{j+(k-1)n_b}(z)&=e_j^\top (zI-A_b)^{-1}B_b[G_b(z)]^{k-1}, \nonumber\\
    & k=1, 2, \cdots, j=1, \cdots, n_b,
\end{align}
where the inner function $G_b(z)$ denotes an $n_b$-th order all-pass filter with poles $\mu_1, \cdots, \mu_{n_b}$, $e_j$ is the $j$-th canonical vector and $(A_b, B_b, C_b, D_b)$ denotes a minimal balanced realization of $G_b(z)$~\cite{van_den_hof_system_1995}.
%

In the OBF method, leveraging \emph{a priori} information of the system, the system operator first selects the order $n_b$, the pole locations $\mu_1, \cdots, \mu_{n_b}$ of the inner function $G_b(z)$, and the number of the bases $\check q$, ensuring that the resulting GOBFs $V_1(z), \cdots, V_{\check q}(z), V_{\check q+1}(z), \cdots$ are complete over the $\mathcal H_2$ space~\cite{van_den_hof_system_2005}. These parameters subsequently allow for the computation of the GOBF $V_1(z), \cdots, V_{\check q}(z)$ in accordance with~\eqref{eq:gobf}. The primary objective then becomes the following least-squares problem:\footnote{Notice that both the closed-loop system and the OBF bases are stable, the extended expectation is well-defined.}
\begin{equation}
    \min_{L_1, \cdots, L_{\check q}}\Ss\left\|y(t)-\sum_{k=1}^{\check q}L_kV_k(z)u(t)\right\|^2,
\end{equation}
using input and output samples $\{u(t), y(t)\}$ of the system $G(z)$. Specifically, given $N$ sample pairs $u(1\!:\!N)=\{u(1), \cdots, u(N)\}, y(1\!:\!N)=\{y(1), \cdots, y(N)\}$, the resulting identification algorithm solves the following least-squares regression:
\begin{equation}\label{eq:finite_least_squares_1}
    \min_{L_1, \cdots, L_{\check q}}\frac{1}{N}\sum_{t=1}^N \left\|y(t)-\sum_{k=1}^{\check q} L_kV_k(z)u(t)\right\|^2,
\end{equation}
with its optimal solution denoted as $L_1(N), \cdots, L_{\check q}(N)$,
and the identified system model becomes:
\begin{equation}
    \check G(z)=\sum_{k=1}^{\check q} L_k(N)V_k(z).
\end{equation}
The least-squares problem~\eqref{eq:finite_least_squares_1} can be further derived into a recursive formulation~\cite{van_den_hof_system_2005}. Additionally, the asymptotic identification bias for SISO systems using the GOBF is quantified~\cite{van_den_hof_system_1995}, which is summarized as follows. Consider the identification of the SISO system $G(z)$ with $n$ poles $\lambda_1, \cdots, \lambda_n$, using $\check q$ GOBF bases $V_1(z), \cdots, V_{\check q}(z)$ with the inner function $G_b(z)$, whose poles are chosen as $\mu_1, \cdots, \mu_{n_b}$, and it is assumed without loss of generality that $\check q=qn_b$. Moreover, denote the asymptotic estimate as
\begin{align}
    &\begin{bmatrix}
        (L_1^*)^\top & \cdots & (L_{\check q}^*)^\top
    \end{bmatrix}^\top=\Ss(\phi(t)\phi(t)^H)^{-1}\Ss(\phi(t)y(t)^H),\nonumber \\
    &\phi(t)=\begin{bmatrix}
        (V_1(z)u(t))^\top & \cdots & (V_{\check q}(z)u(t))^\top
    \end{bmatrix}^\top.
\end{align}
The following theorem provides an upper bound on the asymptotic approximation bias between $G$ and $\check G$.
\begin{theorem}[Asymptotic bias of GOBF~\cite{van_den_hof_system_1995}]\label{thm:gobf_bias}
    For a small constant $\delta>0$, let
    \begin{equation}\label{eq:tau_definition}
        \tau(\boldsymbol{\lambda}, \boldsymbol{\mu})\triangleq \left(\max_{j=1, \cdots, n}\prod_{k=1}^{n_b}\left|\frac{\lambda_j-\mu_k}{1-\bar\lambda_j\mu_k}\right|\right)^{2/n_b}+\delta.
    \end{equation}
    Then, there exists a constant $c>0$, such that the system approximation bias satisfies
    \begin{equation}
        \begin{aligned}
            \left\|\sum_{k=1}^{\check q} L_k^*V_k(z)-G(z)\right\|^2 \leq \tilde c\frac{\tau(\boldsymbol{\lambda}, \boldsymbol{\mu})^{(q+1)n_b}}{1-\tau(\boldsymbol{\lambda}, \boldsymbol{\mu})^{n_b}},
        \end{aligned}
    \end{equation}
    where $\|\cdot\|$ denotes the system $\mathcal H_2$ norm, $\tilde c=c\left(1+\frac{\mathrm{ess}\sup_{\omega}\Phi_u(\omega)}{\mathrm{ess}\inf_{\omega}\Phi_u(\omega)}\right)^2$ and $\Phi_u(\omega)$ denotes the power spectral density of the input signal $u(t)$.
\end{theorem}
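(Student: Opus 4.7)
The plan is to combine three ingredients: the completeness of the GOBF in $\mathcal{H}_2$, a residue calculation on the unit circle, and a weighted-vs-unweighted least-squares comparison. Since $\{V_k\}_{k=1}^\infty$ forms an orthonormal basis of $\mathcal{H}_2$, setting $L_k^o\triangleq \langle G, V_k\rangle_{\mathcal{H}_2}$ for the orthogonal projection coefficients yields $\|\sum_{k=1}^{\check q} L_k^o V_k - G\|^2_{\mathcal{H}_2}=\sum_{k=\check q+1}^\infty |L_k^o|^2$. I would therefore reduce the claim to (i) showing that this tail decays geometrically at rate $\tau(\boldsymbol{\lambda},\boldsymbol{\mu})^{n_b}$, and (ii) showing that the actual least-squares bias is within a constant factor of this orthogonal bias, the constant depending only on the input spectrum.

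For step (i), I would use the residue theorem to express $L_k^o$ as a sum of residues of $G(z)\,\overline{V_k(1/\bar z)}\,z^{-1}$ at the poles $\lambda_1,\dots,\lambda_n$ of $G$ inside the unit disk. Because $V_{j+(k-1)n_b}(z)$ carries the factor $[G_b(z)]^{k-1}$, each residue at $\lambda_j$ picks up $[G_b(\lambda_j)]^{k-1}$ times a factor that depends only on the local behaviour of $G$ and the remaining rational part of $V_k$. Since $G_b$ is all-pass with poles $\mu_1,\dots,\mu_{n_b}$, the modulus $|G_b(\lambda_j)|=\prod_{k'=1}^{n_b}\bigl|\tfrac{\lambda_j-\mu_{k'}}{1-\bar\mu_{k'}\lambda_j}\bigr|$ is strictly less than one by the standard Blaschke-factor bound. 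Taking the worst case over $j$ and summing the geometric tail starting at $k=\check q+1=qn_b+1$ yields the $\tau^{(q+1)n_b}/(1-\tau^{n_b})$ shape; the additive $\delta$ in the definition of $\tau$ is there precisely to absorb polynomial-in-$k$ prefactors produced by repeated or near-repeated poles of $G$.

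For step (ii), I would rewrite the least-squares criterion in the frequency domain via Parseval's identity as $\frac{1}{2\pi}\int_{-\pi}^{\pi}|G(e^{j\omega})-\sum_k L_k V_k(e^{j\omega})|^2\,\Phi_u(\omega)\,d\omega$. Sandwiching $\Phi_u(\omega)$ between $m\triangleq \mathrm{ess}\inf_{\omega}\Phi_u(\omega)$ and $M\triangleq \mathrm{ess}\sup_{\omega}\Phi_u(\omega)$, a standard comparison of quadratic forms shows that the $\Phi_u$-weighted minimizer $L^*$ satisfies $\|G-\sum_k L_k^*V_k\|^2_{\mathcal{H}_2}\le (M/m)\,\|G-\sum_k L_k^o V_k\|^2_{\mathcal{H}_2}$, and a mild rescaling produces the $(1+M/m)^2$ factor appearing in $\tilde c$.

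The main obstacle is the residue computation in step (i): repeated or near-repeated poles $\lambda_j$ force higher-order residues whose contributions grow polynomially in $k$, which is why one must inflate the clean bound $\max_j|G_b(\lambda_j)|^{2/n_b}$ by an arbitrary $\delta>0$ rather than keep it as is. A secondary bookkeeping issue is obtaining uniformity across the $n_b$ choices of canonical vector $e_j$ in the GOBF definition so that the constant $c$ does not blow up with $n_b$.
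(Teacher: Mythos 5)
This theorem is quoted from van den Hof et al.\ and the paper gives no proof of it; the only related material is in Appendix~B, where the coefficient-tail bound of Proposition~6.4 of that reference is adapted and an $\mathrm{ess}\sup$ bound on the input spectrum plays the role of your step~(ii). Your sketch is a faithful reconstruction of that standard argument --- geometric decay of the orthogonal expansion coefficients via residues at the $\lambda_j$ and the Blaschke moduli $\bigl|\tfrac{\lambda_j-\mu_k}{1-\bar\lambda_j\mu_k}\bigr|$, with $\delta$ absorbing polynomial prefactors from repeated poles, followed by a $\Phi_u$-weighted versus unweighted least-squares comparison yielding a constant controlled by $\mathrm{ess}\sup\Phi_u/\mathrm{ess}\inf\Phi_u$ --- so it matches the intended proof in both structure and substance. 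The only quibble is a convention slip: with $G_b$ taken as the stable all-pass function having poles at the $\mu_k$, it is $|G_b(\lambda_j)|^{-1}=\prod_{k'}\bigl|\tfrac{\lambda_j-\mu_{k'}}{1-\bar\mu_{k'}\lambda_j}\bigr|<1$ (equivalently, the residue picks up $[G_b^{*}(1/\lambda_j)]^{k-1}=[G_b(\lambda_j)]^{-(k-1)}$), not $|G_b(\lambda_j)|$ itself; this does not affect the decay rate you obtain.
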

Additionally, parametrizing the ARX filter via OBF methods leads to the OBF-ARX filter for SISO systems introduced in~\cite[Chapter 4]{van_den_hof_system_2005}:
\begin{equation}\label{eq:obf_arx}
    \check y(t)=\sum_{k=1}^{\check q_u}L_k^u V_k^u(z)u(t)+\sum_{k=1}^{\check q_y}L_k^yV_k^y(z)y(t),
\end{equation}
where $V_k^u(z)$ and $V_k^y(z)$ can be selected as two different sets of OBFs, and the coefficients $L_k^u, L_k^y$ are obtained by solving a least-squares problem given $N$ samples $\{u(t), y(t)\}_{t=1}^N$:
\begin{equation}
    \min_{L_k^u, L_k^y}\frac{1}{N}\sum_{t=1}^N\left\|y(t)-\check y(t)\right\|^2.
\end{equation}
The least-squares problem can be further derived into a recursive manner~\cite[Chapter 4]{van_den_hof_system_2005}.

\section{Main Results}\label{sec:main_results}
To facilitate the regret analysis, we first derive an OBF-ARX filter by approximating the transfer function of KF using the linear combination of OBFs and describe the corresponding online prediction algorithm. Subsequently, we analyze the online average regret of the derived OBF-ARX filter.

\subsection{Problem Formulation}
Consider an $n$-dimensional LTI system with $m$ inputs and $p$ outputs taking the following form:
\begin{align}\label{eq:true_system}
    x(t+1)&=Ax(t)+Bu(t)+w(t),\nonumber \\
    y(t)&=Cx(t)+v(t),
\end{align}
where $w(t)$ and $v(t)$ represent i.i.d. zero mean random noise with covariances $Q\succeq 0$ and $R\succ 0$, respectively. $w(t)$ and $v(t)$ are mutually independent.
\begin{assumption}\label{assump:convergence}
    The system is controlled by a stabilizing linear control law, formulated by an $n_u$-dimensional LTI system:
        \begin{align}\label{eq:stabilizing_controller}
            \psi(t+1)&=A_u \psi(t)+B_u y(t)+w_u(t),\nonumber \\
            u(t)&=C_u \psi_u(t)+v_u(t),
        \end{align}
        where $w_u(t)$ and $v_u(t)$ are i.i.d. random noise with zero mean and covariances $Q_u\succeq 0$ and $R_u\succeq 0$, respectively. $w_u(t)$ and $v_u(t)$ are mutually independent.\label{assump:control_input}
\end{assumption}
\begin{remark}
    The controller described in~\eqref{eq:stabilizing_controller} encompasses typical control inputs, for instance, the white noise inputs commonly used in system identification and linear controllers such as PID.
\end{remark}

For simplicity of the analysis, we assume that the system has reached the steady state after operating for a sufficiently long time. Now we formulate the output prediction problem~\cite{Tsiamis2023}, which aims to use historical inputs and outputs to minimize the output prediction error in the minimum MSE sense. In this paper, we focus on linear predictors, where the predicted output is a linear combination of past inputs and outputs. The problem is formulated as:
\begin{equation}\label{eq:filter_optimal}
    \hat y^*(t\mid t-1)=\arg\min_{y\in\mathcal L_t}\mathbb E[\|y(t)-y\|^2],
\end{equation}
where $\mathcal L_t$ denotes the set of all possible linear combinations of past inputs $u(-\infty:t-1)$ and outputs $y(-\infty:t-1)$.
\begin{definition}[Average regret]
    The average regret over $N$ time steps for an online linear output predictor $\check y(t\mid 1:t-1)$ using finite past information $u(1:t-1), y(1:t-1)$ is defined as: 
    \begin{equation}\label{eq:regret}
        \mathcal R_N=\frac{1}{N}\sum_{t=1}^N \|\check y(t\mid 1:t-1)-\hat y^*(t\mid t-1)\|^2.
    \end{equation}
\end{definition}

It is well established that the steady-state KF provides an optimal solution to~\eqref{eq:filter_optimal} given the true system parameters $A, B, C$ and covariance matrices $Q, R$~\cite{humpherys2012fresh}:
\begin{equation}\label{eq:KF}
    \begin{aligned}
        \hat x(t+1)&=A(I-KC)\hat x(t)+Bu(t)+AKy(t), \\
        \hat y^*(t)&=C\hat x(t),
    \end{aligned}
\end{equation}
where $K=PC^H(CPC^H+R)^{-1}$ and $P$ is the solution to the algebraic Riccati equation:
\[P=APA^H-APC^H(CPC^H+R)^{-1}CPA^H+Q.\]
Hence, the average regret of the filter $\check y$ can be reformulated as follows leveraging the optimality of the steady-state KF among all linear predictors:
\begin{equation}
    \mathcal R_N=\frac{1}{N}\sum_{t=1}^N \|\check y(t)-\hat y^*(t)\|^2,
\end{equation}
where $\hat y^*(t)$ is the predicted output of the steady-state KF using $u(-\infty\!:\!t-1), y(-\infty\!:\!t-1)$ and we write $\check y(t\mid 1\!:\!t-1)$ as $\check y(t)$ for simplicity of notations.

\subsection{OBF-ARX Filter Inspired by KF}
For the output prediction problem without relying on explicit system parameters, we derive an OBF-ARX filter inspired by KF. By viewing the KF as a stable LTI system that takes inputs $u(t), y(t)$ to produce the estimated output $\hat y^*(t)$, we can approximate the transfer function of KF $\hat G(z)$ using the linear combination of a set of scalar GOBFs $V_1(z), \cdots, V_{\check q}(z)$ with its inner function as the $n_b$-th order all-pass filter $G_b$ and the number of bases $\check q=qn_b$ without loss of generality. This formulation leads to a variant of the OBF-ARX filter:
\begin{equation}\label{eq:OBF-ARX_form}
    \begin{aligned}
        \check y(t)&=\left(\sum_{k=1}^{\check q} L_kV_k(z)\right)\begin{bmatrix} u(t) \\ y(t)\end{bmatrix} \\
        &=\sum_{k=1}^{\check q} L_k^uV_k(z)u(t)+\sum_{k=1}^{\check q} L_k^yV_k(z)y(t),
    \end{aligned}
\end{equation}
where $L_k=[L_k^u\ L_k^y], L_k^u\in\mathbb C^{p\times m}, L_k^y\in\mathbb C^{p\times p}, V_k(z)u(t)=[V_k(z)u(t)_1, \cdots, V_k(z)u(t)_p]$ with $u(t)_k$ denoting the $k$-th element of $u(t)$.
Let $L=[L_1\ \cdots\ L_{\check q}]$,
\begin{equation}\label{eq:check_x_def}
    \check x_k(t)=\begin{bmatrix} V_k(z)u(t) \\ V_k(z)y(t) \end{bmatrix}, \check x(t)=\begin{bmatrix} \check x_1(t)^\top & \cdots & \check x_{\check q}(t)^\top \end{bmatrix}^\top.
\end{equation}

To construct the online prediction algorithm, after selecting the OBFs $V_k(z)$ in a manner similar to the procedure in system identification, we aim to solve the following least-squares problem to optimize the coefficients $L$ using online samples:\footnote{Notice that the closed-loop true system, the system's KF, and the derived OBF-ARX filter, being stable LTI systems, each converges to their unique steady states over time. As a result, the following limits all exist: $\lim_{t\to\infty} \mathbb E[\phi(t)\phi(t)^H], \phi\in\{x, y, \hat y^*, \check x, \check y\}.$}
\begin{equation}\label{eq:KF_least_squares}
    L^*=\arg\min_{L}\Ss \|\hat y^*(t)-L\check x(t)\|^2.
\end{equation}
Since KF provides the optimal output prediction among all linear estimators, it follows that
\begin{align}
    L^*&=\arg\min_{L}\{\Ss\|y(t)-\hat y^*(t)\|^2+\Ss\|\hat y^*(t)-L\check x(t)\|^2\nonumber \\
    &\qquad\qquad\qquad+2\Ss[(y(t)-\hat y^*(t))^\top(\hat y^*(t)-L\check x(t))]\}\nonumber \\
    &=\arg\min_{L}\ \Ss\|y(t)-L\check x(t)\|^2.\label{eq:y_least_squares}
\end{align}
Hence, we can determine $L$ by solving the latter least-squares problem, which has a direct relationship with input and output samples. The online prediction algorithm using the OBF-ARX filter is summarized as:
\begin{enumerate}
    \item Update the state of the filter for $k=1, \cdots, \check q$ with initial states $\check x(0)=\boldsymbol{0}_{\check q(p+m)}, \mathcal W(0)=\boldsymbol{0}_{(p+\check q(p+m))\times (p+\check q(p+m))}$:\footnote{The update of $\check x$ can be further written into a recursive form using the state-space realization of the OBFs~\cite{van_den_hof_system_2005}. The explicit form is omitted due to the space limit.}
    \begin{equation}\label{eq:finite_state_update}
        \begin{aligned}
            \check x_k(t)&=\begin{bmatrix} (V_k(z)u(t))^\top & (V_k(z)y(t))^\top\end{bmatrix}^\top,
        \end{aligned}
    \end{equation}
    \[\checkW(t+1)=\frac{t}{t+1}\checkW(t)+\frac{1}{t+1}\begin{bmatrix}y(t) \\ \check x(t)\end{bmatrix}\begin{bmatrix}y(t) \\ \check x(t)\end{bmatrix}^H, \]
    and predict the output $\check y(t)=L(t)\check x(t)$.
    Since $V_k(z)$ is casual, $\check x(t)$ only possesses information of $y(1\!:\!t-1)$ and $u(1\!:\!t-1)$.
    \item With $L(0)=\boldsymbol{0}_{p\times \check q(p+m)}$, update $L(t)$ by solving a least-squares problem if $\checkW(t+1)$ is invertible. Otherwise, $L(t)=L(t-1)$:\footnote{The least-squares algorithm can be further written into a recursive least-squares form, which is omitted due to the space limit.}
    \begin{equation}\label{eq:finite_least_squares}
        \begin{aligned}
            L(t+1)&=\text{argmin}_{L}\begin{bmatrix}
                I_p & -L
            \end{bmatrix}\checkW(t+1)\begin{bmatrix}
                I_p & -L
            \end{bmatrix}^H.
        \end{aligned}
    \end{equation}
\end{enumerate}
\begin{remark}
    The derived OBF-ARX filter coincides with the predictor introduced in~\cite{vau_closed-loop_2021}, despite being obtained through an entirely different approach.  In subsequent discussions, we shall show that the intrinsic connection between~\eqref{eq:OBF-ARX_form} and the KF facilitates the regret analysis.
\end{remark}

\subsection{Average Regret Analysis}
In this subsection, we quantify the average regret of the derived OBF-ARX filter.

We first introduce another assumption that is common in the OBF literature~\cite{van_den_hof_system_2005}:
\begin{assumption}[Persistent excitation]\label{assump:input}
    Assuming that the input $u(t)$ is persistently exciting of a sufficiently high order, combined with the previous assumption that $R\succ 0$, guarantees that $\Ss(\check x(t)\check x(t)^H)$ and $\checkW(N)$ in the algorithm are invertible for sufficiently large $N$. Additionally, there exists a constant $\underline \alpha>0$, such that $\Ss(\check x(t)\check x(t)^H)\geq \underline\alpha I_m$.\label{assump:convergence_input_excite}
\end{assumption}

Let $\check y^*(t)$ denote the predicted output of the asymptotic OBF-ARX filter, expressed as:
\begin{equation}
    \check y^*(t)=\sum_{k=1}^{\check q} L_k^*\check x_k(t),
\end{equation}
where $L^*=[L_1^*\ \cdots\ L_{\check q}^*]$ is the solution to the following least-squares problem:
\begin{equation}\label{eq:L_star_def}
    L^*=\arg\min_{L} \begin{bmatrix} I_p & -L\end{bmatrix}\Ss\left(\begin{bmatrix} y(t) \\ \check x(t)\end{bmatrix}\begin{bmatrix} y(t) \\ \check x(t)\end{bmatrix}^H\right)\begin{bmatrix} I_p \\ -L^H\end{bmatrix}.
\end{equation}

Then, the average regret~\eqref{eq:regret} is upper bounded by
\begin{equation}\label{eq:regret_decomposition}
    \begin{aligned}
        \mathcal R_N\leq\frac{2}{N}\sum_{t=1}^N\|\check y(t)-\check y^*(t)\|^2+\frac{2}{N}\sum_{t=1}^N\|\check y^*(t)-\hat y^*(t)\|^2.
    \end{aligned}
\end{equation}

The following theorem demonstrates that the first term in~\eqref{eq:regret_decomposition}, quantifying the gap between the identified filter and the asymptotic filter, converges to $0$ almost surely, and the second term converges to the asymptotic bias of the filter. The proof of the theorem is provided in Appendix~\ref{append:convergence}. 
\begin{theorem}[Almost sure convergence]\label{thm:convergence}
    Under Assumption~\ref{assump:convergence} and~\ref{assump:input}, the squared error between the regressed filter and the asymptotic OBF-ARX filter satisfies
    \begin{equation}\label{eq:gap_convergence}
        \lim_{N\to\infty}\frac{\frac{1}{N}\sum_{t=1}^N\|\check y(t)-\check y^*(t)\|^2}{N^{-1+\epsilon}}=0 \ a.s.,
    \end{equation}
    for all $\epsilon>0$. Moreover, the squared error averaged over time between the prediction of KF and that of the asymptotic OBF-ARX filter converges to the asymptotic bias with
    \begin{equation}\label{eq:bias_convergence}
        \lim_{N\to\infty}\frac{\frac{1}{N}\sum_{t=1}^N\|e^*(t)\|^2-\Ss\|e^*(t)\|^2}{N^{-0.5+\epsilon}}=0 \ a.s.,
    \end{equation}
    for all $\epsilon>0$, where $e^*(t)=\check y^*(t)-\hat y^*(t)$. 
\end{theorem}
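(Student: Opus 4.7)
My plan is to prove \eqref{eq:gap_convergence} and \eqref{eq:bias_convergence} separately, in each case reducing the claim to an almost sure rate of convergence for a sample average of a functional of the closed-loop stationary process.

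For \eqref{eq:gap_convergence}, I would start from the pointwise identity $\check y(t)-\check y^*(t)=(L(t)-L^*)\check x(t)$, which reduces the task to controlling $\|L(t)-L^*\|$. Solving \eqref{eq:finite_least_squares} in closed form gives $L(t)=[\checkW(t)]_{yx}[\checkW(t)]_{xx}^{-1}$ and \eqref{eq:L_star_def} yields the analogous $L^*=W_{yx}^*(W_{xx}^*)^{-1}$, where $W^*\triangleq\Ss(\zeta(t)\zeta(t)^H)$ with $\zeta(t)=[y(t)^\top,\check x(t)^\top]^\top$. Applying the identity $A^{-1}-B^{-1}=-A^{-1}(A-B)B^{-1}$ together with the uniform positivity of $[\checkW(t)]_{xx}$ for large $t$ granted by Assumption~\ref{assump:input} gives $\|L(t)-L^*\|=O(\|\checkW(t)-W^*\|)$.

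The core step is then an almost sure rate $\|\checkW(N)-W^*\|=O(N^{-0.5+\epsilon})$. Since $\checkW(N)$ is the empirical mean of the stationary, geometrically ergodic matrix process $\zeta(t)\zeta(t)^H$ (the closed-loop system in \eqref{eq:true_system}--\eqref{eq:stabilizing_controller} is stable and driven by i.i.d. noise, and each $V_k(z)$ is stable), this rate follows from a law-of-iterated-logarithm bound for geometrically mixing sequences with sub-exponential tails, applied entrywise. Combining the two steps gives $\|L(t)-L^*\|^2=O(t^{-1+\epsilon})$ a.s., and a Borel--Cantelli argument applied to $\|\check x(t)\|^2$ (which has bounded moments of every order) yields $\|\check x(t)\|^2=O(t^{\epsilon'})$ a.s. for arbitrarily small $\epsilon'$. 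Hence
\[
\frac{1}{N}\sum_{t=1}^N\|\check y(t)-\check y^*(t)\|^2\leq\frac{1}{N}\sum_{t=1}^N\|L(t)-L^*\|^2\|\check x(t)\|^2=O(N^{-1+\epsilon}),
\]
which is \eqref{eq:gap_convergence}; the finitely many indices for which $\checkW(t)$ is singular contribute only an $O(1/N)$ correction.

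For \eqref{eq:bias_convergence} I would observe that $e^*(t)=\check y^*(t)-\hat y^*(t)$ is the output of a \emph{fixed} stable LTI system driven by the underlying i.i.d. noises $w,v,w_u,v_u$, so $\{\|e^*(t)\|^2\}$ is a stationary ergodic scalar sequence with geometrically decaying dependence. The claim is then exactly an LIL-type bound for the empirical mean of this sequence about its stationary expectation, and the sub-exponential tail of the quadratic functional is handled by truncation plus Borel--Cantelli. The main obstacle is the rigorous $O(N^{-0.5+\epsilon})$ almost sure LIL for \emph{quadratic} functionals of the geometrically mixing stationary vector process, uniformly in the matrix $2$-norm when applied to $\checkW(N)-W^*$; once this functional LIL is in place, the remaining algebra is routine.
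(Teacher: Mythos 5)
Your overall architecture matches the paper's: you reduce \eqref{eq:gap_convergence} to the identity $\check y(t)-\check y^*(t)=(L(t)-L^*)\check x(t)$, obtain the rate on $L(t)-L^*$ by viewing the least-squares solution as a smooth (resolvent-type) function of the empirical covariance $\checkW(N)$, and trace everything back to an $O(N^{-0.5+\epsilon})$ almost sure rate for the empirical second moments of outputs of a stable LTI system driven by i.i.d.\ noise; for \eqref{eq:bias_convergence} you treat $\|e^*(t)\|^2$ as a stationary functional of the same process. This is exactly the paper's route, except that the paper packages the closed loop, the Kalman filter, and the OBF state into one augmented stable system and invokes a citable lemma (Lemma~\ref{lemma:y_cov_convergence}, from the reference on online covariance estimation) for the $\mathcal C_N(-0.5)$ rate, whereas you leave that ingredient as an acknowledged ``main obstacle.'' Since the paper itself imports that lemma rather than proving it, identifying it correctly is acceptable.

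There is, however, one step where your argument has a genuine gap that the paper's does not. To bound $\frac{1}{N}\sum_{t=1}^N\|L(t)-L^*\|^2\|\check x(t)\|^2$ you invoke a pointwise almost sure bound $\|\check x(t)\|^2=O(t^{\epsilon'})$ obtained by Borel--Cantelli from ``bounded moments of every order.'' The standing assumptions only give the noises zero mean and finite covariance; with second moments alone, Chebyshev plus Borel--Cantelli yields only $\|\check x(t)\|^2=O(t^{1+\epsilon})$ a.s., which destroys the $N^{-1+\epsilon}$ rate, and even finite fourth moments would only give $O(t^{0.5+\epsilon})$. The paper avoids any pointwise control of $\|\check x(t)\|^2$: it uses only that the running averages $\frac{1}{k}\sum_{t=1}^k\|\check x(t)\|^2$ are a.s.\ bounded by some $\Phi$ (which already follows from the covariance convergence), and then bounds $\sum_{t=1}^N\xi_t t^{2\beta+2\epsilon}$ over all nonnegative sequences with bounded prefix averages via a small linear program (equivalently, Abel summation with the decreasing weights $t^{2\beta+2\epsilon}$), whose optimum is $\xi_t\equiv\Phi$. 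You should either replace your Borel--Cantelli step with this summation-by-parts argument, or add an explicit higher-moment assumption on the noise that the theorem as stated does not make. A second, minor point: your LIL-for-mixing-sequences appeal should be checked for the same reason --- the quadratic functionals involved are not bounded, and whatever concentration device you use must work under the stated moment conditions.
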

It remains to quantify the asymptotic bias $\tilde e\triangleq\lim_{t\to\infty}\mathbb E\|\check y^*(t)-\hat y^*(t)\|^2$. Let the transfer function of~\eqref{eq:true_system} be $G(z)=C(zI-A)^{-1}B$. Then, we rewrite~\eqref{eq:true_system} as:
\[y(t)=G(z)u(t)+\epsilon(t),\]
where $\epsilon(t)$ is the system noise composed of $v(t), w(t-1), w(t-2), \cdots$. We now introduce a commonly adopted assumption in the OBF literature~\cite{van_den_hof_system_2005}:
\begin{assumption}\label{assump:noise}
    The system noise sequence $\epsilon(t)$ is a martingale difference sequence.
\end{assumption}
\begin{theorem}[Asymptotic bias]\label{thm:bias_analysis}
        Under Assumption~\ref{assump:convergence} and Assumption~\ref{assump:noise}, there exists a constant $\alpha>0$ independent of $q$ and $n_b$, such that the asymptotic bias between the true system's Kalman filter and the OBF-ARX filter satisfies
        \begin{equation}\label{eq:finite_time_bias}
            \lim_{t\to\infty}\mathbb{E}[\|\check y^*(t)-\hat y^*(t)\|^2]\leq \alpha\frac{\tau(\boldsymbol{\lambda}, \boldsymbol{\mu})^{(q+1)n_b}}{1-\tau(\boldsymbol{\lambda}, \boldsymbol{\mu})^{n_b}},
        \end{equation}
        where the expression of $\tau(\boldsymbol{\lambda}, \boldsymbol{\mu})$ is defined in Theorem~\ref{thm:gobf_bias}, $\boldsymbol{\lambda}=\{\lambda_1, \cdots, \lambda_n\}$ denotes the eigenvalues of the KF and $\boldsymbol{\mu}=\{\mu_1, \cdots, \mu_{n_b}\}$ are the poles of $G_b(z)$ in GOBF.
\end{theorem}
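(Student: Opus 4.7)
The plan is to reinterpret the KF as an LTI system whose transfer function $\hat G(z) = C(zI - A(I-KC))^{-1}[B,\ AK]$ maps $[u(t)^\top, y(t)^\top]^\top$ to $\hat y^*(t)$, view $\sum_{k=1}^{\check q} L_k^* V_k(z)$ as the LS-optimal OBF approximation of $\hat G(z)$ weighted by the input spectrum, and bound the resulting approximation error via the classical GOBF $\mathcal H_2$ tail-decay estimate for rational transfer functions.

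First, $\hat G(z)$ is a stable rational matrix-valued transfer function whose poles form a subset of $\boldsymbol{\lambda}$ (the eigenvalues of $A(I-KC)$). The definition of $L^*$ in~\eqref{eq:KF_least_squares}, equivalent to~\eqref{eq:L_star_def} via the martingale-difference orthogonality of the KF innovation under Assumption~\ref{assump:noise}, gives the variational bound
\[\Ss\|\check y^*(t)-\hat y^*(t)\|^2 \le \Ss\Bigl\|\Bigl(\hat G(z)-\sum_{k=1}^{\check q}\tilde L_k V_k(z)\Bigr)\begin{bmatrix}u(t)\\ y(t)\end{bmatrix}\Bigr\|^2\]
for every choice of $\tilde L_1,\dots,\tilde L_{\check q}$. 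I would take $\tilde L_k$ to be the entrywise truncated GOBF expansion of $\hat G(z)$, reducing the problem to bounding this right-hand side for that fixed choice.

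Next I would bound it in two steps. Under Assumption~\ref{assump:convergence}, the closed loop~\eqref{eq:true_system}--\eqref{eq:stabilizing_controller} is a stable LTI system driven by bounded-covariance white noise, so $(u,y)$ is jointly stationary and its spectral density $\Phi(\omega)$ satisfies $\|\Phi(\omega)\|\le\beta<\infty$ almost everywhere; the Wiener--Khinchin relation then yields $\Ss\|H(z)[u(t)^\top,y(t)^\top]^\top\|^2 \le \beta\|H\|_{\mathcal H_2}^2$ for every $H\in\mathcal H_2^{p\times(m+p)}$. It remains to bound $\|\hat G-\sum_k \tilde L_k V_k\|_{\mathcal H_2}^2$ entrywise; since each $\hat G_{ij}(z)$ is rational in $\mathcal H_2$ with poles in $\boldsymbol{\lambda}$, the scalar GOBF tail-decay estimate underlying Theorem~\ref{thm:gobf_bias} (see also~\cite{heuberger_generalized_1995}) produces a bound of the form $c_{ij}\tau(\boldsymbol{\lambda},\boldsymbol{\mu})^{(q+1)n_b}/(1-\tau(\boldsymbol{\lambda},\boldsymbol{\mu})^{n_b})$ with $c_{ij}$ independent of $q,n_b$. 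Summing over the finitely many entries $(i,j)$ and multiplying by $\beta$ yields~\eqref{eq:finite_time_bias} with $\alpha=\beta\sum_{i,j}c_{ij}$.

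The main obstacle is to isolate the pure $\mathcal H_2$-projection form of the GOBF tail bound: Theorem~\ref{thm:gobf_bias} as stated carries an extra input-spectrum-ratio factor that arises from using the LS-optimal rather than $\mathcal H_2$-optimal approximation, whereas here only the orthogonal-projection version is needed, and it follows cleanly from the geometric decay of GOBF expansion coefficients for rational transfer functions. This observation also explains why Assumption~\ref{assump:input} on persistent excitation is not required for this theorem---only the upper bound $\beta$ on the signal spectrum matters, not any lower bound on it.
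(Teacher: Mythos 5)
Your proposal follows essentially the same route as the paper's proof: use the LS-optimality of $L^*$ to reduce to the truncated GOBF expansion of the KF transfer function $\hat G(z)$, bound the resulting time-domain MSE by $\mathrm{ess}\sup_\omega\|\Phi_{\tilde y}(\omega)\|$ times the $\mathcal H_2$-norm of the expansion tail via Wiener--Khinchin and OBF orthogonality, and then invoke the entrywise geometric decay of the GOBF coefficients (Proposition~6.4 of~\cite{van_den_hof_system_1995}) to obtain the $\tau(\boldsymbol{\lambda},\boldsymbol{\mu})$ bound. Your closing remarks---that only the projection form of the tail bound (without the spectrum-ratio factor of Theorem~\ref{thm:gobf_bias}) is needed, and that Assumption~\ref{assump:input} is not required here---are both consistent with what the paper actually does.
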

Please refer to Appendix~\ref{append:bias} for the proof of the theorem.
\begin{remark}
    Theorem~\ref{thm:bias_analysis} reveals that the asymptotic bias of the OBF-ARX filter has an exponential decay rate w.r.t. the number of the OBF bases $\check q=qn_b$. Specifically, the decay rate of the asymptotic bias, denoted as $\tau(\boldsymbol{\lambda}, \boldsymbol{\mu})$, relies on the pole locations of OBFs and their relationship with the eigenvalues of the KF. This result indicates that \emph{a prior} knowledge of the KF can facilitate the pole selection of the OBF-ARX filter.
\end{remark}

Finally, we synthesize the results in Theorem~\ref{thm:convergence} and Theorem~\ref{thm:bias_analysis} into the following theorem:
\begin{theorem}[Average regret]\label{thm:regret}
    Under Assumption~\ref{assump:control_input},~\ref{assump:input} and~\ref{assump:noise}, there exists a constant $\bar\alpha>0$, such that the average online regret $\mathcal R_N$ of the OBF-ARX filter over $N$ time steps satisfies
    \begin{equation}
         \mathcal R_N\sim \bar\alpha\tau(\boldsymbol{\lambda}, \boldsymbol{\mu})^{\check q}+O(N^{-0.5+\epsilon}) \ a.s.
    \end{equation}
    for all $\epsilon>0$, where $\check q$ denotes the number of GOBF bases and the expression of $\tau(\boldsymbol{\lambda}, \boldsymbol{\mu})<1$ is defined in Theorem~\ref{thm:gobf_bias}.
\end{theorem}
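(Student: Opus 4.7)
The plan is to stitch together the decomposition in~\eqref{eq:regret_decomposition} with Theorem~\ref{thm:convergence} and Theorem~\ref{thm:bias_analysis}; essentially no new technical machinery is required, since Theorem~\ref{thm:regret} is a synthesis statement whose two ingredients have already been proven separately.

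First, I would start from the upper bound $\mathcal R_N \leq \frac{2}{N}\sum_{t=1}^N \|\check y(t)-\check y^*(t)\|^2 + \frac{2}{N}\sum_{t=1}^N \|e^*(t)\|^2$ given in~\eqref{eq:regret_decomposition}, and handle the two pieces separately. The first piece, which captures the finite-sample identification error between the regressed filter and its asymptotic counterpart, is controlled by~\eqref{eq:gap_convergence} in Theorem~\ref{thm:convergence}: it is almost surely $o(N^{-1+\epsilon})$ for every $\epsilon>0$, and since $N^{-1+\epsilon}$ is absorbed into $N^{-0.5+\epsilon}$, this term is folded directly into the announced residual $O(N^{-0.5+\epsilon})$.

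Second, I would handle the bias-error piece by invoking~\eqref{eq:bias_convergence} from Theorem~\ref{thm:convergence} to write
\[
    \frac{2}{N}\sum_{t=1}^N \|e^*(t)\|^2 \;=\; 2\,\Ss\|e^*(t)\|^2 \;+\; o(N^{-0.5+\epsilon}) \quad \text{a.s.,}
\]
thereby reducing the time-average to the deterministic asymptotic bias $\Ss\|e^*(t)\|^2$. Applying Theorem~\ref{thm:bias_analysis} then yields $\Ss\|e^*(t)\|^2 \leq \alpha\,\tau(\boldsymbol{\lambda},\boldsymbol{\mu})^{(q+1)n_b}/(1-\tau(\boldsymbol{\lambda},\boldsymbol{\mu})^{n_b})$. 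Using $\check q = q n_b$ I would rewrite $\tau^{(q+1)n_b} = \tau^{n_b}\cdot\tau^{\check q}$ and collect constants by defining $\bar\alpha \triangleq 2\alpha\,\tau(\boldsymbol{\lambda},\boldsymbol{\mu})^{n_b}/(1-\tau(\boldsymbol{\lambda},\boldsymbol{\mu})^{n_b})$, which is strictly positive since $\tau(\boldsymbol{\lambda},\boldsymbol{\mu})<1$ from Theorem~\ref{thm:gobf_bias}. Combining with the first step then delivers $\mathcal R_N \sim \bar\alpha\,\tau(\boldsymbol{\lambda},\boldsymbol{\mu})^{\check q} + O(N^{-0.5+\epsilon})$ almost surely, in the sense of the $\sim O(\cdot)$ notation defined in the Notations paragraph.

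No step here poses a genuine obstacle, since all the hard analytic work lives in Theorems~\ref{thm:convergence} and~\ref{thm:bias_analysis}. The only bookkeeping points I would double-check are: (i) that Assumptions~\ref{assump:convergence}, \ref{assump:input}, and~\ref{assump:noise} together license the simultaneous use of both theorems, which is immediate from the hypotheses of Theorem~\ref{thm:regret}; (ii) that the almost-sure statements of Theorem~\ref{thm:convergence}, quantified over $\epsilon>0$, can be combined on a single probability-one event, which follows by intersecting countably many such events; and (iii) that the constants absorbed into $\bar\alpha$ and into the $O(N^{-0.5+\epsilon})$ residual depend only on the system parameters and on $\tau(\boldsymbol{\lambda},\boldsymbol{\mu})$, not on $N$ or $\check q$, so that the decomposition is meaningful uniformly in the number of bases.
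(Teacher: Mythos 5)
Your proposal is correct and follows exactly the route the paper intends: Theorem~\ref{thm:regret} is stated as a direct synthesis of the decomposition~\eqref{eq:regret_decomposition} with Theorems~\ref{thm:convergence} and~\ref{thm:bias_analysis}, and your bookkeeping (absorbing the $o(N^{-1+\epsilon})$ term into the residual, rewriting $\tau^{(q+1)n_b}=\tau^{n_b}\tau^{\check q}$, and setting $\bar\alpha=2\alpha\tau^{n_b}/(1-\tau^{n_b})$) is precisely the missing glue the paper leaves implicit.
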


\section{Simulations}\label{sec:simulations}
This section provides numerical examples to verify the derived bounds.
We consider a heat diffusion process~\cite{mo_network_2009} in a $(3\times 3)\mathrm{m}^2$ square region, with certain obstacles inside, as shown in Fig.~\ref{fig:region_shape}. The two small circles center at $(0.75, 2.25)$ and $(2.25, 2.25)$ respectively, with radius $0.1\mathrm{m}$, and the half circle centers at $(1.5, 0.95)$ with radius $0.55\mathrm{m}$.

We denote the temperature at $(x, y)$ and time $t$ as $s(x, y, t)$, and the dynamics of the diffusion process in the square region can be characterized by the following Partial Differential Equation (PDE):
\begin{equation}
    \frac{\partial s}{\partial t}=\alpha(x, y)\left(\frac{\partial^2 s}{\partial x^2}+\frac{\partial^2 s}{\partial y^2}\right),
\end{equation}
with the boundary condition
\[
        s(x, y, t)=0, \forall (x, y)\in\mathcal{B},
\]
where $\alpha(x, y)$ denotes the diffusion constant at $(x, y)$, and $\mathcal{B}$ denotes the boundary of the region. A heat source in the region is located at $(1.5, 2.25)$, and it is set to the temperature $u_k\sim\mathcal{N}(0, 1)$ at each time step. A sensor is placed at $(1.5, 1.5)$. We then discretize the system in a $10\times 10$ grid into a $100$-dimensional LTI system with the following parameters:
\begin{itemize}
    \item Diffusion constant randomly selected from $\alpha(x, y)\in[0.005, 0.02]$.
    \item Covariance of the noises: $Q=0, R=0.01$.
    \item Parameters of the discretization:
    \begin{itemize}
        \item Time step: $0.1\mathrm{s}$;
        \item Space step: $0.3\mathrm{m}$;
    \end{itemize}
    \item Total simulation time: $200\mathrm{s}$.
\end{itemize}

In our simulations, by injecting i.i.d. Gaussian noise with zero mean and unit covariance as input at each time step and recording the output, we leverage the OBF-ARX algorithm with $10$ Laguerre bases for simulations across $100$ distinct experiments. Each of these experiments is conducted on a different diffusion process, with its diffusion constant being randomly selected from the interval $[0.005, 0.02]$, to ensure a broad representation of system behaviors. The average regret for each system, defined in~\eqref{eq:regret}, is plotted in~Fig.\ref{fig:regret_simulation}, which demonstrates the consistency of the filter's regret with the theoretical bounds in Theorem~\ref{thm:regret}. Moreover, the average asymptotic bias over the $100$ experiments is $9.27\times 10^{-5}$, which is negligible compared to the overall regret.

\begin{figure}
    \centering
    \begin{subfigure}{0.4\linewidth}
        \includegraphics[width=\textwidth]{./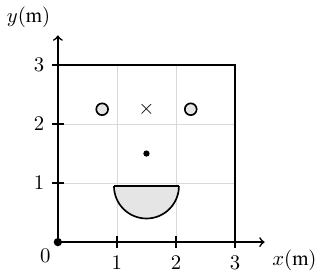}
        \caption{The shape of the region considered in the diffusion process. The obstacles are shown in grey, the heat sources are denoted as $\times$, and the sensor is denoted as the black dot.}
        \label{fig:region_shape}
    \end{subfigure}
    \hspace{0.3cm}
    \begin{subfigure}{0.5\linewidth}
        \includegraphics[width=\linewidth]{./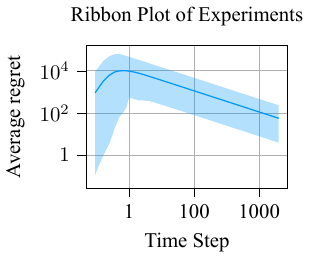}
        \caption{The average regret of the OBF-ARX filter with $10$ Laguerre bases for the heat diffusion process in a log-log ribbon plot. The average asymptotic bias over the $100$ experiments is $9.27\times 10^{-5}$.}
        \label{fig:regret_simulation}
    \end{subfigure}
    \caption{The shape of the region considered in the diffusion process and the average regret.}
\end{figure}

\section{Conclusion}\label{sec:conclusions}

This paper illustrates that the OBF-ARX filter is an accurate approximation of optimal KF in the output prediction problem of an unknown LTI system by quantifying the online average regret of the filter. Specifically, we demonstrate that the average regret of the OBF-ARX filter over $N$ time steps converges to the asymptotic bias at the speed of $O(1/N^{-0.5+\epsilon})$ almost surely for all $\epsilon>0$. Furthermore, we prove that the bias of the average regret decays exponentially w.r.t. the number of OBF bases. Finally, numerical simulations demonstrate that the average regret of the OBF-ARX filter aligns with the theoretical bounds. Notice that the decreasing rate of the asymptotic bias explicitly depends on the relationship between the poles of the KF and those of the OBF, the pole selection of the OBF-ARX filter leveraging \emph{a priori} knowledge of the KF is left for future work.




\appendices

\section{Proof of Theorem~\ref{thm:convergence}}\label{append:convergence}
For simplicity of notations, for a random variable, vector or matrix $x(t)$, we denote that $x(t)\sim \mathcal C_t(\beta)$ if for all $\epsilon>0$, we have that $x(t)\sim O(t^{\beta+\epsilon})\ a.s.$ as $t$ tends to infinity, i.e., $\lim_{t\to\infty}\frac{\|x_t\|}{t^{\beta+\epsilon}}=0$ almost surely. Moreover, define the average function as $\mathcal S_N(\phi(t))=\frac{1}{N}\sum_{t=1}^N \phi(t)$.
We first introduce the following lemmas:
\begin{lemma}[\cite{liu2020online}]\label{lemma:convergence_operation}
    Assuming that $x(t)\sim \mathcal{C}_t(\beta)$ and $y(t)\sim \mathcal{C}_t(\gamma)$, with $\beta\geq \gamma$, then: 1) $x(t)+y(t)\sim\mathcal{C}_t(\beta), x(t)\times y(t)\sim\mathcal{C}_t(\beta+\gamma)$. 2) Suppose $f$ is differentiable at $0$ and $\beta<0$, then $f(x(t))-f(0)\sim \mathcal{C}_t(\beta)$.
\end{lemma}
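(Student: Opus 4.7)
The plan is to verify each claim directly from the definition: $x(t)\sim\mathcal{C}_t(\beta)$ means that for every $\epsilon>0$, $\|x(t)\|/t^{\beta+\epsilon}\to 0$ almost surely. All three assertions reduce to elementary norm inequalities together with careful bookkeeping of exponents; the only non-trivial analytic ingredient appears in Part~2, where a first-order expansion is invoked.

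For the sum in Part~1, I would fix an arbitrary $\epsilon>0$ and apply the triangle inequality
$$\frac{\|x(t)+y(t)\|}{t^{\beta+\epsilon}}\leq \frac{\|x(t)\|}{t^{\beta+\epsilon}}+\frac{\|y(t)\|}{t^{\beta+\epsilon}}.$$
The first term vanishes almost surely by hypothesis. The second term equals $(\|y(t)\|/t^{\gamma+\epsilon})\cdot t^{\gamma-\beta}$; since $\gamma-\beta\leq 0$ the prefactor is bounded, and the quotient vanishes almost surely because $y(t)\sim\mathcal{C}_t(\gamma)$. For the product, the key idea is to split the $\epsilon$ budget: given $\epsilon>0$, write $\epsilon=\epsilon_1+\epsilon_2$ with $\epsilon_1,\epsilon_2>0$, and bound
$$\frac{\|x(t)\,y(t)\|}{t^{\beta+\gamma+\epsilon}}\leq \frac{\|x(t)\|}{t^{\beta+\epsilon_1}}\cdot\frac{\|y(t)\|}{t^{\gamma+\epsilon_2}},$$
so that the product vanishes almost surely on the intersection of two full-measure events.

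For Part~2, I would use differentiability of $f$ at the origin to write $f(x)-f(0)=Df(0)\,x+r(x)$ with $\|r(x)\|/\|x\|\to 0$ as $x\to 0$. Since $\beta<0$, choosing any $\epsilon\in(0,-\beta)$ in the definition of $\mathcal{C}_t(\beta)$ forces $\|x(t)\|\leq t^{\beta+\epsilon}$ eventually, and hence $x(t)\to 0$ almost surely. On that full-measure event, the remainder satisfies $\|r(x(t))\|\leq \|x(t)\|$ for all sufficiently large $t$, so $\|f(x(t))-f(0)\|\leq (\|Df(0)\|+1)\|x(t)\|$. Dividing both sides by $t^{\beta+\epsilon'}$ for arbitrary $\epsilon'>0$ transfers the almost-sure rate directly from $x(t)$ to $f(x(t))-f(0)$.

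The main difficulty is not analytical but logical bookkeeping: the definition of $\mathcal{C}_t(\beta)$ carries a universal quantifier over $\epsilon>0$, and every invocation of the hypothesis produces its own almost-sure event depending on the chosen $\epsilon$. The conclusion must therefore be asserted on the intersection of a countable cofinal family of such events (for instance those indexed by $\epsilon=1/n$), which is still of full measure. Once this is kept in mind, the entire argument consists of routine triangle inequality estimates with no probabilistic tool beyond the definition itself.
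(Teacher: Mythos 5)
Your proof is correct. The paper does not prove this lemma itself --- it is imported from \cite{liu2020online} --- so there is no in-paper argument to compare against; your direct verification from the definition (triangle inequality plus $t^{\gamma-\beta}\le 1$ for the sum, splitting the $\epsilon$ budget as $\epsilon_1+\epsilon_2$ for the product, and a first-order expansion combined with the observation that $\beta<0$ forces $x(t)\to 0$ almost surely for the composition) is the standard and expected route, and your handling of the per-$\epsilon$ full-measure events is sound. The only implicit ingredient worth flagging is the submultiplicativity $\|x(t)\,y(t)\|\le\|x(t)\|\,\|y(t)\|$ used in the product step, which does hold for the $2$-norm employed throughout the paper.
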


\begin{lemma}[\cite{liu2020online}]\label{lemma:y_cov_convergence}
    For the noisy state-space model~\eqref{eq:true_system} with $B=\boldsymbol{0}$, we assume that $w(t)$ and $v(t)$ are i.i.d. random signals with zero mean and covariance $Q$ and $R$ respectively, and all $w(t)$ are independent with all $v(t)$. We also assume that the system's initial state $x(0)$ is a zero mean random variable with a finite second moment. Let
    \begin{equation}
        \vartheta(t)\triangleq CA^tx(0)+\sum_{l=0}^{t-1}CA^l w(t-1-l)+v(t),
    \end{equation}
    then
    \begin{equation}
        \frac{1}{N}\sum_{t=1}^N \vartheta(t)\vartheta(t)^H-\mathcal{W}\sim \mathcal{C}_N(-0.5),
    \end{equation}
    with $\mathcal{W}=C\Sigma C^H+R$, where $\Sigma$ is the solution to the Lyapunov equation $\Sigma=A\Sigma A^H+Q$.
\end{lemma}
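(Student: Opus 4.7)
\textbf{Proof plan for Lemma~\ref{lemma:y_cov_convergence}.} The plan is to split $\tfrac{1}{N}\sum_{t=1}^N \vartheta(t)\vartheta(t)^H - \mathcal W$ into a deterministic bias plus a zero-mean stochastic fluctuation and bound each at the claimed rate. The key quantitative ingredient throughout is the exponential stability of $A$, which is implicit since the discrete Lyapunov equation $\Sigma = A\Sigma A^H + Q$ admits a finite positive semidefinite solution only when the spectral radius of $A$ is strictly less than one.

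For the bias, mutual independence of $x(0)$, $w(\cdot)$, $v(\cdot)$ gives $\mathbb E[\vartheta(t)\vartheta(t)^H] = CA^t \mathbb E[x(0)x(0)^H](A^t)^H C^H + C\Sigma_t C^H + R$, where $\Sigma_t = \sum_{l=0}^{t-1} A^l Q (A^l)^H$. Because $\|\Sigma_t - \Sigma\|$ and $\|A^t\|^2$ decay geometrically, $\|\mathbb E[\vartheta(t)\vartheta(t)^H] - \mathcal W\| \le c_1 \rho^t$ for some $\rho \in (0,1)$. Cesaro averaging then yields $\|\tfrac{1}{N}\sum_{t=1}^N \mathbb E[\vartheta(t)\vartheta(t)^H] - \mathcal W\| = O(1/N)$, which is already within the target rate $\mathcal C_N(-0.5)$.

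For the fluctuation, my plan is to expand each $\vartheta(t)\vartheta(t)^H$ as a quadratic form in the primitive noises and, using the filtration $\mathcal F_t = \sigma(x(0), w(0\!:\!t-1), v(0\!:\!t))$, apply the telescoping identity $\vartheta(t)\vartheta(t)^H - \mathbb E[\vartheta(t)\vartheta(t)^H] = \sum_{s=0}^{t} \bigl(\mathbb E[\vartheta(t)\vartheta(t)^H \mid \mathcal F_s] - \mathbb E[\vartheta(t)\vartheta(t)^H \mid \mathcal F_{s-1}]\bigr)$. Summing over $t$ and exchanging the order of summation rewrites the aggregated fluctuation as a single $\{\mathcal F_s\}$-martingale whose increments are $L^2$-bounded thanks to $\sum_{l\ge 0}\|A^l\|^2 < \infty$. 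Invoking a strong law of large numbers with rate for $L^2$-bounded martingale differences (of the type used in \cite{liu2020online}) then gives the almost-sure rate $\mathcal C_N(-0.5)$ for the fluctuation.

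The main obstacle is the reshuffling of the double sum so that the predictable part aligns cleanly with the unconditional expectation being subtracted and so that the resulting martingale increments can be shown to be $L^2$-bounded uniformly in both $s$ and $N$. Exponential stability of $A$ is exactly what makes every cross-noise contribution summable and hence the martingale-rate hypothesis verifiable; combining the $O(1/N)$ bias bound with the $\mathcal C_N(-0.5)$ martingale bound then yields the stated rate for every $\epsilon>0$.
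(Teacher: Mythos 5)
This lemma is not proved in the paper at all: it is imported verbatim from \cite{liu2020online}, so there is no in-paper argument to compare yours against. Judged on its own terms, your plan is a reasonable reconstruction. The bias part is fully correct: the cross terms vanish by independence, $\mathbb E[\vartheta(t)\vartheta(t)^H]=CA^t\mathbb E[x(0)x(0)^H](A^H)^tC^H+C\Sigma_tC^H+R$ converges geometrically to $\mathcal W$, and Ces\`aro averaging gives $O(1/N)$, well inside $\mathcal C_N(-0.5)$. The fluctuation part via the telescoping conditional-expectation identity and a rate-equipped martingale strong law is a standard and workable route (essentially Gordin-style martingale approximation), and it is the natural way to handle the fact that $\vartheta(t)\vartheta(t)^H$ is a dependent sequence.

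Two points need to be made honest before this becomes a proof. First, $\vartheta(t)\vartheta(t)^H$ is \emph{quadratic} in the primitive noises, so the increments $D_{t,s}=\mathbb E[\vartheta(t)\vartheta(t)^H\mid\mathcal F_s]-\mathbb E[\vartheta(t)\vartheta(t)^H\mid\mathcal F_{s-1}]$ contain terms such as $CA^{t-s}\bigl(w(s-1)w(s-1)^H-Q\bigr)(A^{t-s})^HC^H$; their $L^2$ norms are controlled by the \emph{fourth} moments of $w$, $v$ and $x(0)$, not the second moments stated in the lemma. Your claim that the increments are ``$L^2$-bounded thanks to $\sum_{l\ge0}\|A^l\|^2<\infty$'' silently assumes these fourth moments exist; you should state this (it is automatic in the Gaussian setting the paper has in mind, and is presumably assumed in \cite{liu2020online}). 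Second, after exchanging the order of summation the increment attached to index $s$ is $M_{s,N}=\sum_{t=s}^N D_{t,s}$, which depends on $N$, so $\{M_{s,N}\}_s$ is a martingale-difference \emph{array} rather than a single martingale. You name this as the obstacle but do not resolve it; the clean fix is to replace $M_{s,N}$ by the full tail sum $M_s=\sum_{t\ge s}D_{t,s}$ (uniformly $L^2$-bounded by geometric decay in $t-s$), apply the martingale strong law with rate to $\sum_s M_s$, and show the discarded remainder $\sum_{s\le N}\sum_{t>N}D_{t,s}$ is uniformly bounded in $L^2$ so that its $N^{-1}$-scaled contribution is negligible almost surely via Chebyshev and Borel--Cantelli along a subsequence. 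With those two repairs the plan goes through and matches the kind of argument one expects in the cited reference.
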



%
%
Define the covariance matrix
\begin{equation}
    \mathcal{W}(t)=\begin{bmatrix}
        y(t)^\top & \hat x(t)^\top & \check x(t)^\top
    \end{bmatrix}^\top\begin{bmatrix}
        y(t)^H & \hat x(t)^H & \check x(t)^H
    \end{bmatrix},
\end{equation}
where $\hat x(t)$ denotes the state of the KF and $\check x(t)$ is defined in~\eqref{eq:check_x_def}. We first prove the convergence property of $\Sn(\mathcal W(t))$ and $L(N)$.

\begin{theorem}\label{thm:convergence_analysis}
    Under Assumption~\ref{assump:convergence} and~\ref{assump:input}, $\Sn(\mathcal W(t))$ and $L(N)$ satisfies
    \begin{equation}
        \begin{aligned}
        &\Sn(\mathcal W(t))-\Ss(\mathcal W(t))\sim\mathcal C_N(-0.5), \\
        & L(N)-L^*\sim \mathcal C_N(-0.5).
        \end{aligned}
    \end{equation}
\end{theorem}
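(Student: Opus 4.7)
The plan is to construct a single augmented stable LTI system whose state vector carries all the quantities appearing in $\mathcal{W}(t)$, driven purely by the i.i.d.\ zero-mean exogenous noises $w(t), v(t), w_u(t), v_u(t)$. Once this is in place, the first claim follows directly from Lemma~\ref{lemma:y_cov_convergence}, and the second claim follows by passing the first through the matrix-inverse map via the smoothness part of Lemma~\ref{lemma:convergence_operation}.

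\textbf{Building the augmented system.} I would concatenate four blocks of linear dynamics: (i) the true plant~\eqref{eq:true_system}, (ii) the stabilizing controller~\eqref{eq:stabilizing_controller}, (iii) the Kalman filter recursion~\eqref{eq:KF}, and (iv) minimal realizations of the state-space forms of the OBF bases $V_k(z)$ driven respectively by $u(t)$ and $y(t)$, producing the components of $\check x_k(t)$ defined in~\eqref{eq:check_x_def}. Stacking the internal states $x(t), \psi(t), \hat x(t)$ together with the OBF states into a single vector $\xi(t)$ gives a recursion of the form
\[
\xi(t+1) = \mathcal{A}\,\xi(t) + \mathcal{B}\,\eta(t),\qquad
\eta(t) = \bigl[w(t)^\top\ v(t)^\top\ w_u(t)^\top\ v_u(t)^\top\bigr]^\top,
\]
with $\mathcal{A}$ Schur-stable: the $(x,\psi)$-block is stable by Assumption~\ref{assump:convergence}, the KF matrix $A(I-KC)$ is Schur under the standard detectability that makes~\eqref{eq:KF} a well-posed steady-state filter, and each OBF block is stable since the inner-function poles satisfy $|\mu_k|<1$. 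The vector $[y(t)^\top,\hat x(t)^\top,\check x(t)^\top]^\top$ is then a linear output of $\xi(t)$ plus a direct feedthrough of the noise $\eta(t)$, which fits the template of Lemma~\ref{lemma:y_cov_convergence} exactly (the $B=\boldsymbol{0}$ requirement is met because the only exogenous input is white noise).

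\textbf{From covariance to filter coefficients.} Lemma~\ref{lemma:y_cov_convergence} applied to the augmented output directly yields
\[
\mathcal{S}_N(\mathcal{W}(t)) - \bar{\mathbb{E}}(\mathcal{W}(t)) \sim \mathcal{C}_N(-0.5),
\]
which is the first claim. For the second claim, observe that $\check{\mathcal{W}}(N)$ is a principal submatrix of $\mathcal{S}_N(\mathcal{W}(t))$ obtained by dropping the $\hat x$-rows and columns, so the same $\mathcal{C}_N(-0.5)$ rate is inherited. Solving~\eqref{eq:finite_least_squares} and~\eqref{eq:L_star_def} gives
\[
L(N)=\mathcal{S}_N\!\bigl(y(t)\check x(t)^H\bigr)\bigl[\mathcal{S}_N\!\bigl(\check x(t)\check x(t)^H\bigr)\bigr]^{-1},\quad
L^*=\bar{\mathbb{E}}\!\bigl(y(t)\check x(t)^H\bigr)\bigl[\bar{\mathbb{E}}\!\bigl(\check x(t)\check x(t)^H\bigr)\bigr]^{-1}.
\]
Since Assumption~\ref{assump:input} provides $\bar{\mathbb{E}}(\check x(t)\check x(t)^H)\succeq \underline\alpha I$, matrix inversion is smooth at the limit point, and the difference $L(N)-L^*$ is expressible through the smooth map $(A,B)\mapsto A B^{-1}$ evaluated at $\mathcal{S}_N(\cdot)$ versus $\bar{\mathbb{E}}(\cdot)$. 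Invoking part~2 of Lemma~\ref{lemma:convergence_operation} together with the closure of $\mathcal{C}_t(\cdot)$ under sums and products (part~1) then produces $L(N)-L^*\sim\mathcal{C}_N(-0.5)$.

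\textbf{Expected main obstacle.} The central technical burden is bookkeeping in the augmented-system construction: one must show that every signal entering $\mathcal{W}(t)$ can be written as a linear functional of a joint state driven only by $\eta(t)$, and then verify Schur-stability of the joint $\mathcal{A}$. Once the system is in this canonical form, Lemmas~\ref{lemma:y_cov_convergence} and~\ref{lemma:convergence_operation} finish the argument almost automatically; the only subtlety is ensuring almost-sure invertibility of $\mathcal{S}_N(\check x(t)\check x(t)^H)$ for all sufficiently large $N$, which is handled by combining the $\mathcal{C}_N(-0.5)$ convergence with the uniform lower bound $\underline\alpha I$ from Assumption~\ref{assump:input}.
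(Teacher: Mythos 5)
Your proposal is correct and follows essentially the same route as the paper: build one Schur-stable augmented LTI system (plant, controller, KF, OBF realizations) driven only by the i.i.d.\ noises, apply Lemma~\ref{lemma:y_cov_convergence} to the stacked output $[y^\top\ \hat x^\top\ \check x^\top]^\top$ to get the $\mathcal C_N(-0.5)$ rate for $\Sn(\mathcal W(t))$, and then push this through the smooth map $(A,B)\mapsto AB^{-1}$ (well-defined near the limit by Assumption~\ref{assump:input}) via Lemma~\ref{lemma:convergence_operation} to obtain $L(N)-L^*\sim\mathcal C_N(-0.5)$. The only cosmetic difference is that the paper absorbs $y(t)$ into the augmented state so that the output has no feedthrough term, whereas you keep a direct feedthrough of $\eta(t)$; both fit the template of Lemma~\ref{lemma:y_cov_convergence}.
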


\begin{proof}
    For simplicity, let us write the dynamics of the closed-loop system with the control input in~\eqref{eq:stabilizing_controller} as the following augmented system:
    \begin{equation}\label{eq:augmented_system}
        \begin{aligned}
            \tilde x(t+1)=\tilde A \tilde x(t)+\tilde w(t),\ \tilde y(t)=\tilde C\tilde x(t)+\tilde v(t),
        \end{aligned}
    \end{equation}
    where
    \[\tilde x(t)=\begin{bmatrix} x(t) \\ \psi(t) \\ u(t) \\ y(t)\end{bmatrix}, \tilde A=\begin{bmatrix}
        A & 0 & B & 0 \\
        0 & A_u & 0 & B_u \\
        0 & C_uA_u & 0 & C_uB_u \\
        CA & 0 & CB & 0
    \end{bmatrix},\]
    \[\tilde y(t)=\begin{bmatrix} u(t) \\ y(t)\end{bmatrix}, \tilde C=\begin{bmatrix} 0 & 0 & I_m & 0 \\ 0 & 0 & 0 & I_p \end{bmatrix},\]
    \[\tilde w(t)=\begin{bmatrix} 
        I & 0 & 0 & 0 \\
        0 & 0 & I & 0 \\
        0 & 0 & C_u & I \\
        C & I & 0 & 0
    \end{bmatrix}\begin{bmatrix}
        w(t) \\ v(t+1) \\ w_u(t) \\ v_u(t+1)
    \end{bmatrix}, \tilde v(t)=\boldsymbol{0}.\]
    It can be verified that $\tilde w(t)$ and $\tilde v(t)$ are mutually independent and each follows an i.i.d. stochastic noise distribution, with their covariances denoted by $\tilde Q$ and $\tilde R$, respectively.
    Let $\tilde n=n+n_u+m+p$ represent the dimension of the augmented system~\eqref{eq:augmented_system}.

    First, we consider the convergence property of $\Sn(\mathcal W(t))$. Denote the state-space realization of the GOBF $V_1(z), \cdots, V_{\check q}(z)$ as the following $\check n_b$-dimensional LTI system:
    \begin{equation}
        \begin{aligned}
            \phi(t+1)&=\check A_b\phi(t)+\check B_{bu}u(t)+\check B_{by}y(t), \\
            \check x(t)&=\check C_b\phi(t).
        \end{aligned}
    \end{equation}
    For simplicity of subsequent derivations, we define the following system that augments the closed-loop system~\eqref{eq:augmented_system}, the KF, and the OBF-ARX filter as:
    \begin{equation}\label{eq:all_augmented_system}
        \begin{aligned}
            X(t+1)=\check AX(t)+W(t), \
            Y(t)=\check CX(t)+V(t),
        \end{aligned}
    \end{equation}
    \[\text{where }X(t)=\begin{bmatrix} \tilde x(t) \\ \hat x(t) \\ \phi(t) \end{bmatrix}, \check A=\begin{bmatrix} \tilde A & & \\
        A_{x\hat x} & A-AKC & \\
        A_{x\check x}  & \boldsymbol{0} & \check A_b
    \end{bmatrix},\]
    \[A_{x\hat x}=\begin{bmatrix}
        \boldsymbol{0}_{n\times n} & \boldsymbol{0}_{n\times n_u} & B & AK\end{bmatrix},
        \]
    \[ A_{x\check x}=\begin{bmatrix}
        \boldsymbol{0}_{\check n_b\times n} & \boldsymbol{0}_{\check n_b\times n_u} & \check B_{bu} & \check B_{by}
    \end{bmatrix},\]
    \[W(t)=\begin{bmatrix} \tilde w(t)^\top & \boldsymbol{0}^\top & \boldsymbol{0}^\top\end{bmatrix}^\top, V(t)=\boldsymbol{0},Y(t)=\begin{bmatrix} y(t) \\ \hat x(t) \\ \check x(t) \end{bmatrix},\]
        \[\check C=\begin{bmatrix}
            \boldsymbol{0}_{(p+n)\times (n+n_u+m)} & I_{p+n} & \boldsymbol{0}_{(p+n)\times \check n_b} \\
            \boldsymbol{0}_{\check n\times (n+n_u+m)} & \boldsymbol{0}_{\check n\times (p+n)} & \check C_b 
        \end{bmatrix},\]
    where $\check n\triangleq \check q(p+m)$ is the dimension of $\check x(t)$. Then, we have that
    \[\Sn(\mathcal W)=\frac{1}{N}\sum_{t=1}^NY(t)Y(t)^H,\]
    with the initial condition of the augmented system:
    \[X(0)=\begin{bmatrix} \tilde x(0)^\top & \hat x(0)^\top & \boldsymbol{0}_{\check n_b}^\top\end{bmatrix}^\top,\]
    where $\tilde x(0)$ and $\hat x(0)$ are already in the steady state. One can verify that the system~\eqref{eq:all_augmented_system} satisfies the assumptions of Lemma~\ref{lemma:y_cov_convergence}. As a result, by Lemma~\ref{lemma:y_cov_convergence},
    \begin{equation}\label{eq:yu_convergence}
        \Sn(\mathcal W(t))-\Ss(\mathcal W(t))\sim \mathcal{C}_N(-0.5).
    \end{equation}

    Next, let us consider the convergence property of $L(N)$. For a Hermitian matrix $X\in\mathbb{C}^{(p+n+\check n)\times (p+n+\check n)}$, define the function $\mathcal{\tilde A}_{l}(X)\triangleq\begin{bmatrix} \boldsymbol{0}_{l\times (p+n+\check n-l)} & I_{l} \end{bmatrix} X \begin{bmatrix} \boldsymbol{0}_{(p+n+\check n-l)\times l} \\ I_{l} \end{bmatrix}$. Moreover, for a matrix $X$ such that $\mathcal{\tilde A}_{\check n}(X)$ is invertible, define a function differentiable at $X$ as:
    \begin{equation}
        \mathcal{A}(X)=\begin{bmatrix} I_{p} & \boldsymbol{0}_{p\times (n+\check n)} \end{bmatrix}X \begin{bmatrix} \boldsymbol{0}_{(p+n)\times \check n} \\ I_{\check n}\end{bmatrix}\mathcal{\tilde A}_{\check n}(X)^{-1}.
    \end{equation}
    According to Assumption~\ref{assump:input}, $\mathcal{\tilde A}_{\check n}(\Ss(\mathcal W(t)))$ is invertible and $\mathcal{\tilde A}_{\check n}(\Sn(\mathcal W(t)))$ is also invertible for a sufficiently large $N$. Therefore, one can verify that
    \[L(N)=\mathcal{A}(\Sn(\mathcal{W}(t))), L^*=\mathcal{A}(\Ss(\mathcal{W}(t))),\]
    for a sufficiently large $N$.
    Then, by Lemma~\ref{lemma:convergence_operation}~2),~\eqref{eq:yu_convergence} and the previous result, we conclude that
    \begin{equation*}\label{eq:Ln_convergence}
        L(N)-L^*\sim \mathcal{C}_N(-0.5).\qedhere
    \end{equation*}
\end{proof}
Combining Theorem~\ref{thm:convergence_analysis} with Lemma~\ref{lemma:convergence_operation}~2), we can quantify the convergence speed of the second term in the average regret~\eqref{eq:regret_decomposition} leveraging the fact that $\tilde A_{n+\check n}(X)$ is differentiable at $\Ss(\mathcal W(t))$:
\begin{equation}\label{eq:regret_term_1}
    \begin{aligned}
        &\Sn(\|\hat y^*(t)-\check y^*(t)\|^2)-\Ss\|\hat y^*(t)-\check y^*(t)\|^2 \\
        &=\begin{bmatrix}C & -L^*\end{bmatrix} \tilde e_N\begin{bmatrix}C^H \\ -(L^*)^H\end{bmatrix}\sim \mathcal C_N(-0.5),
    \end{aligned}
\end{equation}
where $\tilde e_N=(\tilde A_{n+\check n}(\Sn(\mathcal W(t)))-\tilde A_{n+\check n}(\Ss(\mathcal W(t))))$.

Next, we prove a result that directly leads to~\eqref{eq:gap_convergence}. Consider the following stable linear time-variant system alongside its time-invariant counterpart:
\begin{equation}
    \begin{aligned}
        x(t+1)&=Ax(t)+w(t), \\
        y(t)&=C(t)x(t)+v(t),\ y^*(t)=C^*x(t)+v(t),
    \end{aligned}
\end{equation}
where $C(t)$ is a time-variant matrix that depends on past noise signals $w(-\infty:t-1), v(-\infty:t-1)$ and $C(t)-C^*\sim\mathcal C_t(-\beta), -0.5\leq \beta<0$. $w(t)$ and $v(t)$ are i.i.d. random signals with zero mean and covariance $Q$ and $R$ respectively. $w(t)$ and $v(t)$ are mutually independent.

\begin{theorem}\label{thm:ltv_convergence}
    Suppose $C(t)$ satisfies
    \begin{equation}\label{eq:C_convergence}
        C(t)-C^*\sim \mathcal C_t(\beta), -0.5\leq \beta<0,
    \end{equation}
    then
    \begin{equation}
        \frac{1}{N}\sum_{t=1}^N \|y^*(t)-y(t)\|^2\sim \mathcal C_N(2\beta).
    \end{equation}
\end{theorem}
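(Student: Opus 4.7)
The identity $y^*(t) - y(t) = (C^* - C(t))x(t)$ reduces the goal to bounding $\frac{1}{N}\sum_{t=1}^N \|(C^*-C(t))x(t)\|^2$, and the plan is to decouple this via the submultiplicative bound $\|(C^*-C(t))x(t)\|^2 \leq \|C(t)-C^*\|^2 \|x(t)\|^2$. By hypothesis~\eqref{eq:C_convergence}, for any $\epsilon_1 > 0$ there almost surely exists a finite random constant $M$ such that $\|C(t)-C^*\|^2 \leq M\,t^{2\beta+2\epsilon_1}$ for all $t \geq 1$. For the state process, the stability of $A$ together with the i.i.d.\ assumption on $(w,v)$ gives a bounded steady-state second moment; applying Lemma~\ref{lemma:y_cov_convergence} (or a standard ergodic argument) to $\|x(t)\|^2$ yields $B_N \triangleq \sum_{t=1}^N \|x(t)\|^2 = \sigma^2 N + o(N)$ almost surely, so in particular $B_N = O(N)$ a.s.

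I would then combine the two factors through Abel summation with the decreasing weights $a_t = t^{2\beta+2\epsilon_1}$:
\[\sum_{t=1}^N a_t \|x(t)\|^2 \,=\, a_N B_N + \sum_{t=1}^{N-1}(a_t - a_{t+1})\,B_t.\]
Using $a_N = N^{2\beta+2\epsilon_1}$, the mean-value estimate $a_t - a_{t+1} = O(t^{2\beta+2\epsilon_1-1})$, and the linear bound $B_t \leq c\,t$ a.s., each term on the right is $O(N^{2\beta+2\epsilon_1+1})$ a.s.\ (the tail sum is of order $\sum_{t=1}^N t^{2\beta+2\epsilon_1}$, which attains this order whenever $2\beta+2\epsilon_1 > -1$, and for $\beta \geq -1/2$ this is guaranteed as soon as $\epsilon_1 > 0$). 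Dividing by $N$ and multiplying by $M$ yields
\[\frac{1}{N}\sum_{t=1}^N \|y^*(t)-y(t)\|^2 \,=\, O\!\left(N^{2\beta+2\epsilon_1}\right) \text{ a.s.,}\]
and since $\epsilon_1 > 0$ is arbitrary, the left-hand side belongs to $\mathcal{C}_N(2\beta)$, which is the claim.

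The main obstacle I anticipate is the almost-sure control of constants inside the Abel summation, particularly ensuring that the random linear bound $B_t \leq c(\omega)\,t$ can be applied uniformly in $t$ without inflating the target exponent. This can be resolved by noting that $B_t/t \to \sigma^2$ a.s.\ implies $B_t \leq (\sigma^2+1)t$ for all $t \geq T_0(\omega)$, and that the finitely many early terms can be absorbed into the random constant $M$. A secondary concern is the dependence between $C(t)$ and $x(t)$ through the shared noise history; however, since the argument uses only deterministic inequalities on each sample path together with two separate a.s.\ growth bounds, this dependence does not interfere with the reasoning.
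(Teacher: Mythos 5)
Your proposal is correct and follows essentially the same route as the paper: the same pointwise reduction $\|y^*(t)-y(t)\|^2\le\|C(t)-C^*\|^2\|x(t)\|^2$, the same invocation of Lemma~\ref{lemma:y_cov_convergence} to obtain an almost-sure linear bound on $\sum_{t=1}^N\|x(t)\|^2$, and the same extremal principle for combining the decreasing weights $t^{2\beta+2\epsilon}$ with that linear partial-sum bound. The only presentational difference is that you execute this last step via Abel summation, whereas the paper phrases it as a small linear program whose optimum is the uniform allocation $\xi_t=\Phi$ --- these yield the identical estimate (Abel summation is in effect the proof of that LP claim), and your version is somewhat more careful about the almost-surely-finite random constants.
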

\begin{proof}
    First, 
    \begin{equation}\label{eq:yopt_ycheck_bound}
        \begin{aligned}
            \Sn(\|y(t)-y^*(t)\|^2)\leq \Sn(\|C(t)-C^*\|^2\|x(t)\|^2).
        \end{aligned}
    \end{equation}
    Since $A$ is stable, $\Ss(x(t)x(t)^H)$ is bounded and by Lemma~\ref{lemma:y_cov_convergence}, 
    \[\Sn(x(t)x(t)^H)-\Ss(x(t)x(t)^H)\sim \mathcal C_N(-0.5).\] 
    Hence, there exists a constant $\Phi$, such that $\frac{1}{N}\sum_{t=1}^N\|x(t)\|^2\leq \Phi\ a.s.$ for all $N\geq 1$. On the other hand, by~\eqref{eq:C_convergence}, there exists a constant $\bar C$ independent of $t$, such that for all $0<\epsilon<-\beta$, $\|C(t)-C^*\|\leq \bar Ct^{\beta+\epsilon}$.
    As a result,
    \[\Sn(\|C(t)-C^*\|^2\|x(t)\|^2)\leq \Sn(\bar C^2t^{2\beta+2\epsilon}\|x(t)\|^2)\ a.s.\]
    To bound the Right Hand Side (RHS), we consider the following linear programming problem:
    \begin{align}
        \max_{\xi_t}\ \sum_{t=1}^N \xi_t t^{2\beta+2\epsilon}\ 
        \text{s.t. }&\frac{1}{k}\sum_{t=1}^k \xi_t\leq \Phi, \forall k=1, 2, \cdots, N,\nonumber \\
        &\xi_t\geq 0, t=1, \cdots, N.
    \end{align}
    One can verify that the optimal solution to the problem is $\xi_t=\Phi$. Hence, 
    \begin{equation*}
        \begin{aligned}
            &\Sn(\|C(t)-C^*\|^2\|x(t)\|^2)\leq \Sn(\bar C^2\Phi t^{2\beta+2\epsilon}) \\
            &\leq \frac{\bar C^2\Phi}{N}\left(\int_1^N t^{2\beta+2\epsilon}dt+1\right) \\
            &=\bar C^2\Phi\left(\frac{1}{2\beta+1+2\epsilon}N^{2\beta+2\epsilon}+\frac{2\beta+2\epsilon}{2\beta+1+2\epsilon}N^{-1}\right) a.s.
        \end{aligned}
    \end{equation*}
    Hence, we can conclude that
    \[\Sn(\|C(t)-C^*\|^2\|x(t)\|^2)\sim \mathcal C_N(2\beta).\qedhere\]
\end{proof}
Therefore, the theorem is proved combining~\eqref{eq:regret_decomposition} with~\eqref{eq:regret_term_1} and Theorem~\ref{thm:ltv_convergence}.
\section{Proof of Theorem~\ref{thm:bias_analysis}}\label{append:bias}
We first discuss the upper bound on the power spectral density of the filter's input process $\{u(k), y(k)\}$. We continue to use the notations of the augmented closed-loop system defined in~\eqref{eq:augmented_system}.

\begin{lemma}\label{thm:uy_bound}
    Assuming the closed-loop system~\eqref{eq:augmented_system} is in the steady state, the power spectral density function $\Phi_{\tilde y}(\omega)$ of the sequence $\tilde y(t)$ satisfies
    \begin{equation}\label{eq:Phi_y_bound}
        \mathrm{ess}\sup_{\omega}\|\Phi_{\tilde y}(\omega)\|\leq \frac{1+\rho}{1-\rho}\|\tilde C\|^2\|\tilde\Sigma\|+\|\tilde R\|,
    \end{equation}
    where $\rho<1$ is the spectral radius of $\tilde A$ and $\tilde \Sigma$ is the solution to the Lyapunov equation $\tilde A\tilde \Sigma \tilde A^H+\tilde Q=\tilde \Sigma$.
    Moreover, the covariance matrix $\Ss(\check x(t)\check x(t)^H)$ has the following upper bound with $\tilde p=p+m$:
    \begin{equation}
        \Ss(\check x(t)\check x(t)^H)\leq (\mathrm{ess}\sup_{\omega}\|\Phi_{\tilde y}(\omega)\|)I_{\tilde p\check q}.
    \end{equation}
\end{lemma}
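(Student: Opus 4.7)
The statement has two parts and I would attack them in sequence, tying the second to the first.

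\emph{Part 1: bounding $\|\Phi_{\tilde y}(\omega)\|$.} Since the augmented system~\eqref{eq:augmented_system} is driven by white noise with covariance $\tilde Q$, I would first write $\Phi_{\tilde y}(\omega) = \tilde C\, \Phi_{\tilde x}(\omega)\, \tilde C^H + \tilde R$. In steady state, the autocorrelation of $\tilde x$ is $R_{\tilde x}(k)=\tilde A^{k}\tilde\Sigma$ for $k\geq 0$ and $R_{\tilde x}(-k)=R_{\tilde x}(k)^H$, with $\tilde\Sigma$ the solution to the Lyapunov equation stated in the lemma. Taking the $Z$-transform of the autocorrelation gives
\[\Phi_{\tilde x}(\omega) = \tilde\Sigma + \sum_{k=1}^{\infty}\tilde A^{k}\tilde\Sigma\, e^{-j\omega k} + \sum_{k=1}^{\infty}\tilde\Sigma(\tilde A^H)^{k}\, e^{j\omega k}.\]
Applying the triangle inequality together with a per-$k$ decay bound of the form $\|\tilde A^{k}\|\leq \rho^{k}$ (see the discussion of the obstacle below) collapses each of the two sums into a geometric series $\rho/(1-\rho)\,\|\tilde\Sigma\|$, yielding $\|\Phi_{\tilde x}(\omega)\|\leq \tfrac{1+\rho}{1-\rho}\|\tilde\Sigma\|$. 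Sandwiching by $\tilde C$ in the spectral norm and adding $\|\tilde R\|$ delivers~\eqref{eq:Phi_y_bound}.

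\emph{Part 2: bounding $\Ss(\check x(t)\check x(t)^H)$.} I would use the frequency-domain description of the OBF filter bank. Let $H(z)$ denote the $\tilde p\check q\times\tilde p$ transfer matrix whose $k$-th block row equals $V_k(z) I_{\tilde p}$, so that $\check x(t)=H(z)\tilde y(t)$. Then
\[\Ss\!\left(\check x(t)\check x(t)^H\right) = \frac{1}{2\pi}\int_{-\pi}^{\pi} H(e^{j\omega})\,\Phi_{\tilde y}(\omega)\,H(e^{j\omega})^H\, d\omega.\]
For an arbitrary unit vector $\alpha\in\mathbb C^{\tilde p\check q}$, the operator-norm bound from Part 1 gives
\[\alpha^H \Ss\!\left(\check x\check x^H\right)\alpha \leq \bigl(\mathrm{ess}\sup_{\omega}\|\Phi_{\tilde y}(\omega)\|\bigr)\cdot \frac{1}{2\pi}\int_{-\pi}^{\pi}\bigl\|H(e^{j\omega})^H\alpha\bigr\|^2 d\omega.\]
The residual integral is closed-form via the $\mathcal H_2$ orthonormality $\frac{1}{2\pi}\int V_i(e^{j\omega}) V_j(e^{j\omega})^*\, d\omega = \delta_{ij}$ of the GOBFs, which, by the block structure of $H$, implies $\frac{1}{2\pi}\int H(e^{j\omega})H(e^{j\omega})^H d\omega = I_{\tilde p\check q}$. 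Hence $\alpha^H\Ss(\check x\check x^H)\alpha \leq (\mathrm{ess}\sup_{\omega}\|\Phi_{\tilde y}\|)\|\alpha\|^2$, which, varying $\alpha$ over the unit sphere, is exactly the second inequality.

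\emph{Main obstacle.} The delicate point is the per-$k$ bound $\|\tilde A^{k}\|\leq \rho^{k}$ underlying Part 1. By Gelfand's formula one only gets $\|\tilde A^{k}\|^{1/k}\to\rho$, so the clean bound without a multiplicative constant requires either that $\tilde A$ be normal, or a similarity transformation that supplies an equivalent norm in which $\|\tilde A\|=\rho$; a generic stable $\tilde A$ gives $\|\tilde A^k\|\leq c(\rho+\epsilon)^{k}$ with a condition-number prefactor $c$. I would track this constant carefully (it can be absorbed into $\|\tilde\Sigma\|$ or $\rho$ by a suitable norm choice) to land on the stated form of~\eqref{eq:Phi_y_bound}. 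The orthonormality-based reduction in Part 2 is then clean and carries no similar subtlety.
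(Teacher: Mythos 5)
Your proposal follows essentially the same route as the paper: bound the autocorrelation sequence of $\tilde y$ by a geometric series to control $\mathrm{ess}\sup_{\omega}\|\Phi_{\tilde y}(\omega)\|$, then push that scalar bound through the Wiener--Khinchin integral and use GOBF orthonormality to collapse $\frac{1}{2\pi}\int V(e^{i\omega})V(e^{i\omega})^H\,\mathrm{d}\omega$ to the identity. The obstacle you flag is real and worth keeping: the paper's own chain bounds $\|\Gamma_{\tilde y}(k)\|$ by $\|\tilde C\|^2\|\tilde\Sigma\|\,\|\tilde A\|^k$ and then silently replaces the induced norm $\|\tilde A\|$ by the (generally smaller) spectral radius $\rho$ in the final step, so the clean constant $\frac{1+\rho}{1-\rho}$ only holds after the normality or equivalent-norm argument you describe, and a generic stable $\tilde A$ incurs the condition-number prefactor you mention.
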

\begin{proof}
    First, we derive the autocorrelation function of $\tilde y(t)$:
    \begin{equation*}
        \Gamma_{\tilde y}(k)\triangleq \mathbb E[\tilde y(t)\tilde y(t+k)^H]=\left\{
            \begin{aligned}
                &\tilde C\tilde \Sigma(\tilde A^H)^k\tilde C^H, k\geq 1, \\
                &\tilde C\tilde A^{-k}\tilde\Sigma\tilde C^H+\tilde R\delta(k), k\leq 0,
            \end{aligned}
        \right.
    \end{equation*}
    where $\delta(k)$ is the Dirac function. Then, the power spectral density function of $\tilde y(t)$ is upper bounded by
    \begin{equation}\label{eq:Phi_y_upper_bound}
        \begin{aligned}
            \|\Phi_{\tilde y}(\omega)\|&\leq 2\sum_{k=1}^\infty \|\Gamma_{\tilde y}(k)\|+\|\Gamma_{\tilde y}(0)\| \\
            &\leq 2\|\tilde C\|^2\|\tilde\Sigma\|\sum_{k=1}^\infty \|A\|^k+ \|\tilde C\|^2\|\tilde \Sigma\|+\|\tilde R\| \\
            &\leq \frac{1+\rho}{1-\rho}\|\tilde C\|^2\|\tilde\Sigma\|+\|\tilde R\|.
        \end{aligned}
    \end{equation}
    As a result,~\eqref{eq:Phi_y_bound} is proved since the RHS of~\eqref{eq:Phi_y_upper_bound} is independent of $\omega$. Moreover, let
    \(V(z)=\begin{bmatrix} V_1(z)I_{\tilde p}\ \cdots\ V_{\check q}(z)I_{\tilde p} \end{bmatrix}^\top,\)
    by the Wiener-Khinchin theorem,
    \begin{equation}\label{eq:frequency_check_x}
        \begin{aligned}
            & \Ss(\check x(t)\check x(t)^H)=\frac{1}{2\pi}\int_{-\pi}^\pi V(e^{i\omega})\Phi_{\tilde y}(\omega)V(e^{i\omega})^H\mathrm{d}\omega \\
            &\leq (\mathrm{ess}\sup_{\omega}\|\Phi_{\tilde y}(\omega)\|)\frac{1}{2\pi}\int_{-\pi}^\pi V(e^{i\omega})V(e^{i\omega})^H\mathrm d\omega \\
            &=(\mathrm{ess}\sup_{\omega}\|\Phi_{\tilde y}(\omega)\|)I_{\tilde p\check q},
        \end{aligned}
    \end{equation}
    where the last step is due to the orthogonality of OBF.
\end{proof}
Then, we can calculate the asymptotic bias of the OBF-ARX filter. For simplicity of derivations, in the rest of the proof, we assume that the OBF-ARX filter has been running for a sufficiently long time such that $\check x(t)$ has reached the steady state. Since $L^*$ is the optimal solution to the problem in~\eqref{eq:KF_least_squares}, the asymptotic bias
\begin{equation}
    \begin{aligned}
        &\mathbb E\|\hat y^*(t)-\check y^*(t)\|^2\leq \mathbb E\|\hat y^*(t)-\tilde L_0\check x(t)\|^2.
    \end{aligned}
\end{equation}
Let the transfer function of the KF from $\tilde y(t)$ to $\hat y^*(t)$ be $\hat G(z)$. Since the OBFs $V_{1}(z), \cdots, V_{\check q}(z), V_{\check q+1}(z), \cdots$ span the $\mathcal H_2$ space~\cite{van_den_hof_system_2005}, there exists $\tilde L_1, \tilde L_2, \cdots\in\mathbb C^{p\times \tilde p}$, such that $\hat G(z)=\sum_{k=1}^{\infty} \tilde L_kV_k(z)$. Let $\tilde L_0=[\tilde L_1\ \cdots\ \tilde L_{\check q}]$, then the output of the steady-state KF is decomposed as:
\[\hat y^*(t)=\tilde L_0\check x(t)+\sum_{k=\check q+1}^\infty \tilde L_k V_k(z)\tilde y(t).\]
Hence, the asymptotic bias is bounded by:
\begin{equation}\label{eq:asymptotic_mse_bound}
    \mathbb E\|\hat y^*(t)-\check y^*(t)\|^2\leq \mathbb E\left\|\sum_{k=\check q+1}^\infty \tilde L_kV_k(z)\tilde y(t)\right\|^2.
\end{equation}
Let $\eta(z)=\sum_{k=\check q+1}^\infty \tilde L_kV_k(z)$, similar to~\eqref{eq:frequency_check_x},
\begin{align}\label{eq:eta_y_bound}
    &\mathbb E\|\eta(z)\tilde y(t)\|^2=\tr\left(\frac{1}{2\pi}\int_{-\pi}^\pi \eta(e^{i\omega})\Phi_{\tilde y}(\omega)\eta(e^{i\omega})^H\mathrm{d}\omega\right)\nonumber \\
    &\leq (\mathrm{ess}\sup_{\omega}\|\Phi_{\tilde y}(\omega)\|)\tr\left(\frac{1}{2\pi}\int_{-\pi}^\pi \eta(e^{i\omega})\eta(e^{i\omega})^H\mathrm{d}\omega\right)\nonumber \\
    &= (\mathrm{ess}\sup_{\omega}\|\Phi_{\tilde y}(\omega)\|)\sum_{k=\check q+1}^\infty \|\tilde L_k\|_{\mathrm F}^2,
\end{align}
where the last step is due to the orthogonality of the OBF and $\|\cdot\|_{\mathrm F}$ denotes the Frobenius norm of a matrix. Next, by adopting the proof of Proposition~6.4 in~\cite{van_den_hof_system_1995} to each element of $[\tilde L_k]_{j, l}$ in the $j$-th row and the $l$-th column of $\tilde L_k$, where $k=\check q+1, \check q+2, \cdots$, it can be similarly demonstrated that a constant $c_{j, l}>0$ exists for $j\!=\!1,\! \cdots\!, p, l\!=\!1\!,\! \cdots\!,\! \tilde p$, such that
\begin{equation}
    \sum_{k=\check q+1}^\infty |[\tilde L_k]_{j, l}|^2\leq c_{j, l}\frac{\tau(\boldsymbol{\lambda}, \boldsymbol{\mu})^{(q+1)n_b}}{1-\tau(\boldsymbol{\lambda}, \boldsymbol{\mu})^{n_b}},
\end{equation}
where the expression of $\tau(\boldsymbol{\lambda}, \boldsymbol{\mu})$ is defined in~\eqref{eq:tau_definition}, with $\boldsymbol{\lambda}=\{\lambda_1, \cdots, \lambda_n\}$ representing the eigenvalues of the KF and $\boldsymbol{\mu}=\{\mu_1, \cdots, \mu_{n_b}\}$ denoting the poles of the GOBF.
The detailed proof is omitted here due to the space limit. Then,
\begin{equation}\label{eq:Le_bound}
    \sum_{k=\check q+1}^\infty\|\tilde L_k\|_{\mathrm{F}}^2\leq c\frac{\tau(\boldsymbol{\lambda}, \boldsymbol{\mu})^{(q+1)n_b}}{1-\tau(\boldsymbol{\lambda}, \boldsymbol{\mu})^{n_b}},
\end{equation}
where $c=\sum_{j=1}^p\sum_{l=1}^{\tilde p}c_{j, l}$.
Hence, combining~\eqref{eq:asymptotic_mse_bound} with~\eqref{eq:eta_y_bound} and~\eqref{eq:Le_bound}, we conclude that there exists $\alpha>0$, such that
\begin{equation}
    \lim_{t\to\infty}\mathbb E\|\check y^*(t)-\hat y^*(t)\|^2\leq \alpha\frac{\tau(\boldsymbol{\lambda}, \boldsymbol{\mu})^{(q+1)n_b}}{1-\tau(\boldsymbol{\lambda}, \boldsymbol{\mu})^{n_b}},
\end{equation}
where $\alpha=c\ \mathrm{ess}\sup_{\omega}\|\Phi_{\tilde y}(\omega)\|$ and $\mathrm{ess}\sup_{\omega}\|\Phi_{\tilde y}(\omega)\|$ is upper bounded by Lemma~\ref{thm:uy_bound}.

\bibliography{ref}
\bibliographystyle{ieeetr}
\end{document}